\def\bbuildrel#1_#2^#3{\mathrel{\mathop{\kern 0pt#1}\limits_{#2}^{#3}}}
\def\NN{\mathbb N}
\def\PP{\mathbb P}
\def\RR{\mathbb R}
\def\bs{\bigskip}
\def\ms{\medskip}
\def\ss{\smallskip}
\def\w{\thinspace\hbox{\hsize 14pt \rightarrowfill}\thinspace}
\def\0{\hbox{$\emptyset$}}
\def\I{\hbox{$\mathcal I$}}
\def\M{\hbox{\rm MGR}}
\def\s{\hbox{$\sigma$}}
\def\sub{\subseteq}
\def\G{\hbox{$\mathcal G$}}
\def\M{\hbox{$\mathcal M$}}
\def\Ra{\Rightarrow}
\def\Lra{\Leftrightarrow}
\def\K{\mathscr{K}}
\def\M{\mathscr{M}}
\def\G{\mathscr{G}}
\def\U{\mathscr{U}}
\theoremstyle{plain}
\newcommand{\co}{\mathfrak c}
\newcommand{\baire}{\NN^{\NN}}
\newtheorem{theorem}{Theorem}[section]
\newtheorem{lemma}[theorem]{Lemma}
\newtheorem{proposition}[theorem]{Proposition}
\newtheorem{remark}[theorem]{Remark}
\newtheorem{cor}[theorem]{Corollary}
\numberwithin{equation}{section}
\begin{document}

\title{On two consequences of CH established by Sierpi\' nski. II} 

\author{R. Pol and P. Zakrzewski}
\address{Institute of Mathematics, University of Warsaw, ul. Banacha 2, 02-097 Warsaw, Poland}
\email{pol@mimuw.edu.pl, piotrzak@mimuw.edu.pl}

\subjclass[2020]{
   03E20, 
   03E15, 
   26A15, 
   54C05  
   54F45  
}

\date{March 5, 2024}

\keywords{uniform continuity, uniform convergence, K-Lusin set, Henderson compactum}

\begin{abstract}

We continue a study of the relations between two consequences of the Continuum Hypothesis discovered by Wacław Sierpiński, concerning uniform continuity of continuous functions and uniform convergence of sequences of real-valued functions, defined on subsets of the real line of cardinality continuum.

\end{abstract}

\maketitle
\section{Introduction}\label{sec:1} 
In \cite{C_8_vs_C_9} we studied the following two consequences of the Continuum Hypothesis (CH) distinguished
by Wacław Sierpiński in his classical treaty \textit{Hypoth\`{e}se du continu} \cite{Si}   (the notation is taken from \cite{Si}):

\begin{enumerate}
	\item[${C}_8$] There exists a continuous function $f:E\to \RR$, $E\sub\RR$, $|E|=\co$, not uniformly continuous on any uncountable subset of $E$.
	
	\item[$C_9$] There is a sequence of functions $f_n:E\to\RR$, $E\sub\RR$, $|E|=\co$, converging pointwise but not converging uniformly on any uncountable subset of $E$.
\end{enumerate}

Sierpiński \cite{si-2} checked that $C_8$ implies ${C}_9$.
The status of the converse implication remains unclear. 
Let us notice, however, that in \textit{Topology I} by Kuratowski \cite{kur-I}, footnote (3) on page 533 suggests that the two  statements are in  fact equivalent. 

In \cite{C_8_vs_C_9} we  considered the following stratifications of statements $C_8$ and $C_9$ for  uncountable cardinals  $\kappa\leq\lambda\leq\co$: 
\begin{enumerate}
	\item[$C_8(\lambda,\kappa)$] 
	There exists a set $E\sub \RR$ 
	of cardinality $\lambda$ and a continuous function $f:E\w \RR$, which is not uniformly continuous on any subset of $E$ of cardinality $\kappa$.
	
	\item[$C_9(\lambda,\kappa)$] There exists a set $E\sub \RR$ of cardinality $\lambda$ (equivalently: for any set $E\sub \RR$ of cardinality $\lambda$)
	and	there is a sequence of functions $f_n:E\to\RR$,  converging on $E$ pointwise but not converging uniformly on any subset of $E$ of cardinality $\kappa$.
\end{enumerate}


In particular, we proved in \cite{C_8_vs_C_9} that:

\begin{itemize}
	
	\item $C_8(\co,\co)\Lra C_9(\co,\co)$, and each of these statements is equivalent to the assertion $\mathfrak{d}=\co$, provided that the cardinal $\co$ is regular.

	\item $C_8(\aleph_1,\aleph_1)\Lra C_9(\aleph_1,\aleph_1)$,
	and each of these statements is equivalent to the assertion $\mathfrak{b}=\aleph_1$.
\end{itemize}
Here $\mathfrak{d}$ and $\mathfrak{b}$ denote, as usual, the smallest cardinality of a dominating and, respectively, an unbounded family in $\baire$ corresponding to the ordering of eventual domination $\leq^*$ (cf. \cite{ha}).  

\ms 
An important role in our considerations was played by the notion of a K-Lusin set  (cf.  \cite{ba-ha}) which we  extended  declaring that an uncountable subset $E$ of a Polish space $X$ is a {\sl $\kappa$-$K$-Lusin set in $X$},  $\aleph_1\leq \kappa\leq\co$, if $|E\cap K|<\kappa$  for every  compact set $K\sub X$. We proved in \cite{C_8_vs_C_9} that $C_9(\lambda,\kappa)$ is equivalent to the statement that there is a Polish space $X$ and a $\kappa$-K-Lusin set of cardinality $\lambda$ in $X$.

\ms 

In this note we present  additional results related to the
subject 
 of \cite{C_8_vs_C_9}. Most of them were earlier  announced in \cite[Section 4]{C_8_vs_C_9}.

\ms

In Sections \ref{sec:2}, \ref{sec:3} and \ref{sec:4} we investigate $C_8$-like phenomena 
in a more general setting. We are interested in two closely related problems ($X$ and $Y$ are fixed separable metric spaces).

\ss 

{\sl Problem 1.} Is the  existence of a set $E\sub X$ 
of cardinality $\lambda$ and a continuous function on $E$ with values in  $Y$, which is not uniformly continuous on any subset of $E$ of cardinality $\kappa$, related to either $C_8(\lambda,\kappa)$ or $C_9(\lambda,\kappa)$? 
 
\ss
{\sl Problem 2.} Does the existence of  a set $E\sub X$ of cardinality $\lambda$ and a continuous function on $E$ with values in  $Y$, which is not uniformly continuous on any subset of $E$ of cardinality $\kappa$, imply that there also exists such a function on $E$ with values in $\RR$?

\ss
Concerning Problem 1, we observe that the existence of  a separable metric space $X$ of cardinality $\lambda$, a metric space $Y$, and a continuous function on $X$ with range in $Y$, which is not uniformly continuous on any subset of $X$ of cardinality $\kappa$, already implies that there exists a $\kappa$-K-Lusin set of cardinality $\lambda$ in some Polish space and, consequently, that $C_9(\lambda,\kappa)$ holds true (cf. Proposition \ref{K-Lusin_from_any_example}). 

Conversely, $C_9(\lambda,\kappa)$ implies that there exists a $\kappa$-K-Lusin set $E$ of cardinality $\lambda$ in $\PP$, the set of irrationals of the unit interval $I=[0,1]$,
 such that for every non-$\sigma$-compact Polish space $Y$ there is a continuous function on $E$ which is not uniformly continuous on any subset of $E$ of cardinality $\kappa$ (cf. Theorem \ref{C_9}). 

On the other hand, if $Y$ is compact, then the existence of  a set $E\sub I$ of cardinality $\lambda$, and a continuous function $f:E\to Y$, which is not uniformly continuous on any subset of $E$ of cardinality $\kappa$, implies 
 $C_8(\lambda,\kappa)$ (cf. Theorem \ref{C_8}).  

\ss

Concerning Problem 2, we show  that if a set $E$ in the Hilbert cube $I^\NN$ is  zero-dimensional and there exists a continuous function on $E$ with range in any uncountable compact metric space $Y$, not uniformly continuous on any  subset of $X$ of cardinality $\kappa$, then there is also such a function with range in any  uncountable compact metric space $Z$, and in particular, in $I$ (cf. Theorem \ref{theorem 1}).

On the other hand, assuming CH, we prove the existence of a set  $E\sub I^\NN$ of cardinality $\co$ such that 
there is a continuous function $f:E\w I^\NN$, which is not uniformly continuous on any subset of $E$ of cardinality $\co$ but each continuous function $g:E\to \RR$ is constant on a subset of $E$  of cardinality $\co$. The construction of a witnessing pair $E$ and $f$ falls under a general scheme, described in Section \ref{sec:4}, of 
constructions of $C_8$-like examples 
based on a generalization of the notion of a
$\kappa$-$K$-Lusin set. 

\ms 

Section \ref{sec:5} is 
a slight departure from the topic but it is closely related to a reasoning of Sierpiński concerning $C_9$. We shall show that a Hausdorff space $X$ is \v{C}ech-complete and Lindel\"{o}f  if and only if there is a sequence $f_0\geq f_1\geq\ldots$ of continuous functions $f_n:X\to I$ converging pointwise to zero but not converging uniformly on any closed non-compact set in $X$ (cf. Theorem \ref{Cech_complete}). The existence of such a sequence for any Polish space $X$ was  a key step in proving that statement $C_9(\lambda,\kappa)$ is equivalent to the existence of a  $\kappa$-K-Lusin set $E$ of cardinality $\lambda$ in $\PP$ (cf.  \cite[Theorem 2.3]{C_8_vs_C_9}).

\bs

In this note $\PP$ always denotes the set of irrationals of the unit interval $I=[0,1]$.
 It is homeomorphic to the Baire space $\baire$, the countable product of the set of natural numbers $\NN=\{0,1,2,\ldots\}$ with the discrete topology (cf. \cite{ke}).

\section{Mappings into non-compact spaces and $C_9$}\label{sec:2}

We start with a general observation.

\begin{proposition}\label{K-Lusin_from_any_example}
	If there exist  a separable metric space $(X,d_X)$ of cardinality $\lambda$, a metric space $(Y,d_Y)$, and a continuous function on $X$ with range in $Y$, which is not uniformly continuous on any subset of $X$ of cardinality $\kappa$, then there exists a $\kappa$-K-Lusin set of cardinality $\lambda$ in some Polish space and, consequently, $C_9(\lambda,\kappa)$ holds true. 
\end{proposition}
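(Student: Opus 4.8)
The plan is to realize the required $\kappa$-$K$-Lusin set as an injective image of $X$ inside a suitable Polish space, choosing the embedding so that \emph{relative compactness of the image of a set $A\sub X$ forces $f$ to be uniformly continuous on $A$}. It is worth noting first why the naive choice fails: the graph $\{(x,f(x)):x\in X\}$ inside the product of the completions of $X$ and $Y$ need not be $K$-Lusin, since (as the example $\sin(1/x)$ shows) a graph can be relatively compact while the function is badly non-uniformly continuous. Relative compactness of the graph is thus far too weak to recover uniform continuity. The idea is therefore to encode the \emph{modulus of continuity} of $f$ in a coordinate living in the non-compact Baire space $\baire$, so that failure of uniform continuity is converted into escape to infinity.

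Concretely, I would first replace $d_Y$ by $\min(d_Y,1)$; this changes neither the topology nor the notion of uniform continuity of restrictions of $f$, so the hypothesis is preserved and we may assume $d_Y\le 1$. For $x\in X$ and $m\in\NN$ set $\omega_m(x)=\sup\{d_Y(f(x),f(x')): x'\in X,\ d_X(x,x')\le 1/m\}$; this is non-increasing in $m$, and continuity of $f$ at $x$ gives $\omega_m(x)\to 0$. Hence $g_n(x):=\min\{m:\omega_m(x)<1/n\}$ is a well-defined natural number, and we obtain a map $g=(g_n)_n\colon X\to\baire$. Letting $\hat X$ be the completion of $X$ (a Polish space, as $X$ is separable metric), define $\iota\colon X\to \hat X\times\baire$ by $\iota(x)=(x,g(x))$. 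Then $\iota$ is injective (its first coordinate is $x$, which is what keeps the cardinality equal to $\lambda$) and its target $Z:=\hat X\times\baire$ is Polish.

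The key step is the implication: if $\iota(A)$ is relatively compact in $Z$, then $f|_A$ is uniformly continuous. Projecting to the second factor, $g(A)$ is contained in a compact subset of $\baire$, and compact subsets of $\baire$ are bounded coordinatewise; so for each $n$ there is $M_n$ with $g_n(x)\le M_n$ for all $x\in A$, that is $\omega_{M_n}(x)<1/n$ for all $x\in A$. Taking $\delta=1/M_n$, this says precisely that $d_X(x,x')\le\delta$ implies $d_Y(f(x),f(x'))<1/n$ for $x,x'\in A$, which is uniform continuity of $f|_A$. This is the heart of the argument and the place where the construction pays off; designing $\iota$ so that non-uniform continuity becomes unboundedness is the only genuinely delicate point, and I expect it to be the main obstacle.

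Finally I would assemble the conclusion. Put $E=\iota(X)$, so that $|E|=|X|=\lambda$. For any compact $K\sub Z$ the set $A=\iota^{-1}(K)$ satisfies $\iota(A)=E\cap K$, which is relatively compact, so by the key step $f|_A$ is uniformly continuous; since $f$ is not uniformly continuous on any subset of cardinality $\kappa$, we get $|E\cap K|=|A|<\kappa$. Thus $E$ is a $\kappa$-$K$-Lusin set of cardinality $\lambda$ in the Polish space $Z$. The assertion $C_9(\lambda,\kappa)$ then follows at once from the characterization recalled in the Introduction, namely that $C_9(\lambda,\kappa)$ is equivalent to the existence of a $\kappa$-$K$-Lusin set of cardinality $\lambda$ in some Polish space (from \cite{C_8_vs_C_9}).
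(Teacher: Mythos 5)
Your proof is correct, but it takes a genuinely different route from the paper's. The paper's argument is shorter and uses two classical facts: it extends $f$ to a continuous $\tilde f\colon G\to\hat Y$ on a $G_\delta$-set $G\supseteq X$ in the completion $\hat X$ (Kuratowski's extension theorem, \cite[Theorem 3.8]{ke}), and then notes that for any compact $K\subseteq G$ the restriction $\tilde f|K$ is uniformly continuous by Heine--Cantor, so $f$ is uniformly continuous on $K\cap X$ and hence $|K\cap X|<\kappa$; thus $X$ \emph{itself} is a $\kappa$-$K$-Lusin set in the Polish space $G$, with no re-embedding needed. You instead avoid the extension theorem entirely: you encode the local modulus of continuity of $f$ as a map $g\colon X\to\baire$ and transplant $X$ to $\iota(X)\subseteq\hat X\times\baire$, using the fact that compact subsets of $\baire$ are coordinatewise bounded to convert relative compactness of $\iota(A)$ into a uniform modulus for $f|_A$. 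Your key step is sound: monotonicity of $\omega_m$ in $m$ makes $g_n(x)\le M_n$ yield $\omega_{M_n}(x)<1/n$, the truncation $\min(d_Y,1)$ correctly handles unbounded $d_Y$, and injectivity of $\iota$ preserves the cardinality computations $|E\cap K|=|\iota^{-1}(K)|<\kappa$. What the paper's approach buys is economy and the stronger conclusion that the original set $X$ witnesses the $K$-Lusin property; what yours buys is self-containedness (no appeal to the extension theorem) and an explicit mechanism --- failure of uniform continuity becomes escape to infinity in the $\baire$ coordinate --- very much in the spirit of the classical proof that $G_\delta$ subsets of Polish spaces are Polish. One cosmetic repair: since the paper's convention is $\NN=\{0,1,2,\ldots\}$, define $g_n(x)=\min\{m\ge 1:\omega_m(x)<1/n\}$ so that $\delta=1/M_n$ is meaningful.
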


\begin{proof}
 Let $f: X\to Y$ be a continuous function which is not uniformly continuous on any subset of $X$ of cardinality $\kappa$. The function $f$ extends (cf. \cite[Theorem 3.8]{ke}) to a continuous function  $\tilde{f}:G\to \hat{Y}$ over a $G_\delta$-set $G$ (with $X\sub G$) in the Polish completion $(\hat{X},\hat{d_X})$ of $(X,d_X)$ into the completion $(\hat{Y},\hat{d_Y})$ of $(Y,d_Y)$. Now, if $K\sub G$ is compact, then $\tilde{f}$ is uniformly continuous on $K$, hence $f$ is uniformly continuous on $K\cap X$. By the choice of $X$, we have that $|K\cap X|<\kappa$, which shows that $X$ is a $\kappa$-K-Lusin set of cardinality $\lambda$ in $G$. By  \cite[Theorem 2.3]{C_8_vs_C_9}, this proves  $C_9(\lambda,\kappa)$.
	
\end{proof}

On the other hand, statement $C_9(\lambda,\kappa)$ already implies (and in view of Proposition \ref{K-Lusin_from_any_example}, is equivalent to) the existence of $C_9$-like example for functions with values in arbitrary non-$\sigma$-compact Polish spaces. 

\begin{theorem}\label{C_9}
	For any uncountable cardinals $\kappa\leq\lambda\leq\co$	the following are equivalent:
	\begin{enumerate}
		\item $C_9(\lambda,\kappa)$,
		\smallskip
		
		
		\item there exist a set $E\sub \RR$ 
		of cardinality $\lambda$, a non-$\sigma$-compact Polish space $Y$, and  a continuous function $f:E\to Y$ which is not uniformly continuous (with respect to arbitrary complete metric on $Y$) on any subset of $E$ of cardinality $\kappa$. 
	\end{enumerate}
		
	Moreover, any $\kappa$-K-Lusin set $E$ of cardinality $\lambda$ in $\PP$ has the  property expressed in (2) with respect to any non-$\sigma$-compact Polish space $Y$.	
		
		
		
		
		\begin{proof}
			$(1)\Ra (2).$ By Theorem \cite[Theorem 2.3]{C_8_vs_C_9}, statement $C_9(\lambda,\kappa)$ implies that there is a  $\kappa$-K-Lusin set $E$ of cardinality $\lambda$ in $\PP$. 
			
			Let $Y$ be an arbitrary non-$\sigma$-compact Polish space. Let $h:\PP\w Y$ be a homeomorphic embedding of $\PP$ onto a closed subspace $h(\PP)$ of $Y$ (cf. \cite[Theorem 7.10]{ke}). We will show that $E$ together with $f=h|E$ have the required properties. 
			
			To that end, let us fix a set $A\sub E$ with $|A|=\kappa$. Then $\bar{A}$, the closure of $A$ in $I$, is not contained in $\PP$ since otherwise $\bar{A}$ would be a compact set in $\PP$, intersecting $E$ on a set of cardinality $\kappa$. 
			So let us pick $a_k\in A$, $k\in\NN$, such that $\lim_{k\to\infty}a_k=a$ and $a\in I\setminus\PP$.
			
			Now, if $f$ were uniformly continuous on $A$ with respect to a complete metric $d$ on $Y$, $f$ would take Cauchy sequences in $A$ to Cauchy sequences in $Y$. In particular, the sequence $(f(a_n))_{n\in\NN}$ would be Cauchy in $Y$, hence $\lim_{k\to\infty}f(a_k)=z$ for some $z\in Y$. This, however, is not the case: since the set $\{a_0, a_1,\ldots\}$ has no accumulation point in $\PP$, the set $\{f(a_0), f(a_1),\ldots\}$ has no accumulation point in $h(\PP)$ and hence also in $Y$, as $h(\PP)$ is closed in $Y$. 
			\ss 
			
			The implication $(2) \Ra (1)$ follows immediately from Proposition \ref{K-Lusin_from_any_example}.

			
		\end{proof}

\end{theorem}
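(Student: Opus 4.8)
The plan is to dispose of the implication $(2)\Ra(1)$ immediately and then, for the converse together with the \emph{Moreover} clause, to manufacture a single witness $f$ out of any $\kappa$-$K$-Lusin set in $\PP$. For $(2)\Ra(1)$ I would simply feed the hypothesis into Proposition \ref{K-Lusin_from_any_example}: a set $E\sub\RR$ of cardinality $\lambda$ is a separable metric space, and statement (2) supplies a continuous map of $E$ into the (Polish, hence metric) space $Y$ that is not uniformly continuous on any subset of cardinality $\kappa$ — exactly the input that proposition requires — whence $C_9(\lambda,\kappa)$ follows.

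For $(1)\Ra(2)$ and the \emph{Moreover} part I would start from \cite[Theorem 2.3]{C_8_vs_C_9}, which converts $C_9(\lambda,\kappa)$ into a $\kappa$-$K$-Lusin set $E$ of cardinality $\lambda$ lying inside $\PP$. Given an arbitrary non-$\sigma$-compact Polish space $Y$, the external fact I would invoke is that $\baire\cong\PP$ embeds as a \emph{closed} subspace of $Y$ (Kechris, \cite[Theorem 7.10]{ke}); fixing such a closed embedding $h\colon\PP\w Y$ and setting $f=h|E$, the goal reduces to showing that $f$ fails to be uniformly continuous, with respect to any complete metric $d$ on $Y$, on every $A\sub E$ with $|A|=\kappa$.

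The crux — and the step I expect to carry the weight of the argument — is extracting from the $K$-Lusin condition a sequence in $A$ that \emph{escapes} $\PP$. Since $E$ meets every compact subset of $\PP$ in fewer than $\kappa$ points, the closure $\bar A$ of $A$ in the compact interval $I$ cannot be contained in $\PP$ (otherwise $\bar A$ would be a compact subset of $\PP$ meeting $E$ in at least $\kappa$ points). I would therefore pick distinct $a_k\in A$ with $a_k\to a$ for some $a\in I\sm\PP$; this gives a sequence that is Cauchy in $I$, hence in $\PP$, yet converges outside $\PP$. If $f$ were uniformly continuous on $A$ for a complete metric $d$ on $Y$, it would send this Cauchy sequence to a Cauchy, hence convergent, sequence $(h(a_k))$ in $(Y,d)$, with some limit $z\in Y$. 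Here closedness does the decisive work: on one hand $z\in\overline{h(\PP)}=h(\PP)$ because $h(\PP)$ is closed in $Y$ and contains every $h(a_k)$; on the other hand, continuity of $h^{-1}$ on $h(\PP)$ would force $a_k\to h^{-1}(z)\in\PP$, contradicting $a_k\to a\notin\PP$ by uniqueness of limits in $I$, so $z\notin h(\PP)$. This contradiction settles the implication and, since the same $E$ and the same construction work verbatim for every non-$\sigma$-compact Polish $Y$, the \emph{Moreover} clause as well.
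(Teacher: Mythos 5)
Your proposal is correct and follows essentially the same route as the paper: $(2)\Rightarrow(1)$ via Proposition \ref{K-Lusin_from_any_example}, and $(1)\Rightarrow(2)$ together with the \emph{Moreover} clause by extracting a $\kappa$-$K$-Lusin set $E\sub\PP$ from \cite[Theorem 2.3]{C_8_vs_C_9}, closedly embedding $\PP$ in $Y$, and deriving a contradiction from a sequence in $A$ converging to a point of $I\sm\PP$. The only cosmetic difference is the final step, where the paper argues that $\{f(a_k)\}$ has no accumulation point in $Y$, while you use closedness of $h(\PP)$ plus continuity of $h^{-1}$ to force $a_k\to h^{-1}(z)\in\PP$; both are equivalent uses of the same facts.
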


\section{Mappings into compact spaces and $C_8$}\label{sec:3} 

The results of the previous section show that $C_8$-like statements for functions with values in non-$\sigma$-compact Polish spaces are actually equivalent to statement $C_9$. 
Apparently,  the situation changes when we consider functions with values in compact spaces. As the following  result  shows, if a  set $E\sub I^\NN$ is  zero-dimensional and there exists a continuous function on $E$ with range in any uncountable compact metric space $Y$, not uniformly continuous on any  subset of $X$ of cardinality $\kappa$, then there is also such a function with values in $\RR$, witnessing that $C_8(\lambda,\kappa)$ holds true.

	\begin{theorem}\label{theorem 1}
		Let $E$ be a 
  zero-dimensional subset of a compact 
  metric space $X$.
		If there exists a continuous function on $E$ with range in a compact metric space $Y$, not uniformly continuous on any  subset of $E$ of cardinality $\kappa$, then there is also such a function with range in 
  the Cantor ternary  set $C$ in $I$. Consequently, for any uncountable compact metric space $Z$, there is also such a function with values in $Z$.
	\end{theorem}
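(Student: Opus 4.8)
The plan is to reduce the compact target $Y$ to the Cantor set by discretizing $f$ coordinate‑wise, using the zero‑dimensionality of $E$ to replace real thresholds by clopen sets, and controlling uniform continuity through oscillation. Throughout, for a map $h$ on a set $A\subseteq E$ into a compact metric space and a point $b\in\overline{A}$ (closures are taken in $X$), write $\mathrm{osc}_A(h,b)=\inf_{U\ni b}\mathrm{diam}\,h(U\cap A)$. Since $\overline{A}$ is compact, I will use the standard fact that $h$ is uniformly continuous on $A$ if and only if $h|_A$ extends continuously to $\overline{A}$, equivalently $\mathrm{osc}_A(h,b)=0$ for every $b\in\overline{A}$; and that for a map $f=(f_n)_n$ into the Hilbert cube $I^\NN$ one has $\mathrm{osc}_A(f,b)=0$ if and only if $\mathrm{osc}_A(f_n,b)=0$ for every $n$ (the nontrivial direction being a routine $\varepsilon$‑argument splitting the product metric into a finite head and a small tail). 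First I would embed $Y$ into $I^\NN$; as all metrics compatible with the topology of a compact metric space are uniformly equivalent, this does not affect uniform continuity, so I may assume $f=(f_n)_n$ with each $f_n\colon E\to I$ continuous.

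Next I construct $g\colon E\to C\cong\{0,1\}^\NN$. For each $n\in\NN$ and each pair of rationals $0\le t\le 1$, $\varepsilon>0$, the sets $\{x\in E:f_n(x)\le t-\varepsilon\}$ and $\{x\in E:f_n(x)\ge t+\varepsilon\}$ are disjoint and closed in $E$; since a zero‑dimensional separable metric space is strongly zero‑dimensional, disjoint closed sets can be separated by a clopen set, so I fix a clopen $U^{(n)}_{t,\varepsilon}\subseteq E$ containing the first set and disjoint from the second. Let $g$ be the map whose coordinates are the (continuous, hence clopen‑valued) indicator functions of this countable family; then $g\colon E\to\{0,1\}^\NN\cong C$ is continuous.

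It remains to show $g$ is not uniformly continuous on any $A\subseteq E$ with $|A|=\kappa$, and here lies the crux. Suppose it were. Then $\mathrm{osc}_A(g,b)=0$ for every $b\in\overline{A}$, so each coordinate indicator of $g$ has zero oscillation along $A$ at every such $b$. I claim each $f_n$ then has zero oscillation along $A$ at every $b\in\overline{A}$: if instead $\mathrm{osc}_A(f_n,b)>0$, the cluster values of $f_n$ along $A$ near $b$ form a closed subset of $[0,1]$ of positive diameter, so I may choose rationals $t,\varepsilon$ with a cluster value below $t-\varepsilon$ and one above $t+\varepsilon$; points of $A$ accumulating to $b$ with $f_n\le t-\varepsilon$ lie in $U^{(n)}_{t,\varepsilon}$, while those with $f_n\ge t+\varepsilon$ lie in its complement, forcing $\mathrm{osc}_A(\mathbf 1_{U^{(n)}_{t,\varepsilon}},b)>0$, a contradiction. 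Hence $\mathrm{osc}_A(f,b)=0$ for every $b\in\overline{A}$, so $f$ is uniformly continuous on $A$, contradicting the hypothesis. The main obstacle, which this uniform choice of separating clopen sets is designed to overcome, is that it does not suffice merely to make the oscillation set of $g$ contain that of $f$: one must transfer oscillation \emph{along every subset $A$}, and the sets $U^{(n)}_{t,\varepsilon}$ achieve this precisely because they are fixed independently of $A$ and genuinely separate the sub‑ and super‑level sets of each $f_n$.

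Finally, for the concluding clause, let $Z$ be any uncountable compact metric space. By the Cantor--Bendixson theorem $Z$ contains a nonempty perfect set, hence a homeomorphic copy of $C$; fix a homeomorphic embedding $\iota\colon C\to Z$ onto a compact subset. As $\iota$ and $\iota^{-1}$ are continuous on compact domains they are uniformly continuous, so $\iota$ is a uniform homeomorphism onto its image; consequently $\iota\circ g\colon E\to Z$ is continuous and is uniformly continuous on a set $A$ exactly when $g$ is, hence it is not uniformly continuous on any subset of $E$ of cardinality $\kappa$.
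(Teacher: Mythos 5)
Your proof is correct, and although it follows the paper's broad strategy---use zero-dimensionality of $E$ to separate countably many pairs of disjoint closed preimages by clopen sets, then assemble the resulting two-valued functions into a single Cantor-set-valued map---it differs genuinely at both key steps. For the choice of pairs, you embed $Y$ into $I^\NN$ and discretize each coordinate $f_n$ at rational thresholds $(t,\varepsilon)$, whereas the paper stays inside $Y$ and fixes a universal countable family $(K_n,L_n)$ of pairs of disjoint compact subsets of $Y$ (one dominating every such pair) and pulls these back to $E$. For the assembly, you take the diagonal map into $\{0,1\}^\NN$, whereas the paper forms the ternary sum $f(x)=\sum_i 3^{-(i+1)}u_i(x)$, landing literally in the ternary Cantor set. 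The most substantive divergence is in transferring the failure of uniform continuity: the paper argues directly, invoking Taimanov's extension theorem (Engelking, \emph{General Topology}, Theorem 3.2.1) to produce disjoint closed sets $K,L\subseteq Y$ whose preimage closures meet, hence some $u_n$ with oscillation $2$ at a point $a\in\bar A$, and then needs a minimal-index argument (making $u_0,\dots,u_{n-1}$ locally constant on $A$ near $a$) to control digit interference in the sum and get oscillation at least $3^{-n-1}$ for $f|A$. Your contrapositive argument avoids both of these ingredients: since you use a product map rather than a sum, zero oscillation of $g$ at a point is \emph{equivalent} to zero oscillation of every indicator coordinate, and your cluster-value argument (an elementary scalar substitute for Taimanov's theorem) then forces zero oscillation of every $f_n$, hence of $f$, at every point of $\bar A$. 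What the paper's route buys is an intrinsic treatment of the compact target and a map into $C\subseteq I$ on the nose; what yours buys is a cleaner oscillation transfer resting only on the clopen-separation fact (which both proofs cite) plus the Hilbert-cube embedding. One cosmetic point: your $g$ takes values in $\{0,1\}^\NN$, so to meet the letter of the statement you should compose with a homeomorphism $\{0,1\}^\NN\to C$, which is uniform in both directions by compactness---the same trivial step as in your final paragraph and in the paper's closing argument.
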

	
	\begin{proof}
		Let $h: E\to Y$ be a continuous function not uniformly continuous on any set of cardinality $\kappa$. 
		
		Using the compactness of $Y$, let as fix a  sequence $(K_n,L_n)_{n\in\NN}$ of pairs of disjoint compact sets in $Y$ such that for any pair $(K,L)$ of disjoint compact sets in $Y$, there is $n$ with $K\sub K_n$ and $L\sub L_n$. 
		

  For each $n\in\NN$, we let $C_n=h^{-1}(K_n)$, $D_n=h^{-1}(L_n)$, and using the fact that $E$ is zero-dimensional and separable, we choose a continuous function $u_n: E\to \{0,2\}$ taking on $C_n$ value 0 and on $D_n$ value 2, cf. \cite[Theorem 1.2.6]{eng-2}.
		
		We shall show that the function $f:E\to I$ defined by the formula
		$$
  f(x)=\sum_{i=0}^\infty \frac{1}{3^{i+1}}u_i(x),
		$$
		which takes values in the  Cantor ternary set $C$, is not uniformly continuous on any  subset of $E$ of cardinality $\kappa$.
		
		To that end, let us fix a set $A\sub E$ with $|A|=\kappa$.
		
		We shall first make the following observation. Let $a\in \bar{A}$, the closure of $A$ in $X$. Then for any $n$, since $u_n$ takes on $A$ values 0 or 2 only, the oscillation
		$$
		\inf\{\hbox{\rm diam} (u_n(A\cap V)):\ V\ \hbox{is a neighbourhood of $a$ in}\ X\}
		$$
		of $u_n|A$ at $a$ is either 0 or 2.
		
		Let us note that  $h|A: A\to Y$ cannot be extended to a continuous (hence, by the compactness of $\bar{A}$, uniformly continuous) function $\bar{h}:\bar{A}\to Y$, since otherwise the function $h|A=\bar{h}|A$ would itself be uniformly continuous, contrary to the the fact that $|A|=\kappa$. Consequently, there must be closed disjoint sets $K$, $L$ in $Y$ such that the closures of $(h|A)^{-1}(K)$ and  $(h|A)^{-1}(L)$ in $X$ meet, cf. \cite[Theorem 3.2.1]{eng-1}. Then for an $n$ with $K\sub K_n$ and $L\sub L_n$, we infer that $\overline{C_n\cap A}\cap  \overline{D_n\cap A}\neq \emptyset$, so let us fix $a\in \overline{C_n\cap A}\cap  \overline{D_n\cap A}$.
		
		It follows that $u_n|A$ has the oscillation at $a$ equal to 2 and let us assume that $n$ is the smallest index with this property. The oscillation of each of the functions $u_0|A,\ldots, u_{n-1}|A$ is then equal to 0, hence we can find a neighbourhood $V$ of $a$ in $X$ such that all these functions have constant values on $A\cap V$.
		
		Let us pick $x_k,\ y_k\in A\cap V$, $k\in\NN$, such that $\lim_{k\to\infty}x_k=\lim_{k\to\infty}y_k=a$ and for 
every $k$, $u_n(x_k)=0$, $u_n(y_k)=2$, hence	
		$$	\begin{matrix} f(y_k)-f(x_k)   =  \frac{2}{3^{n+1}} +\sum\limits_{i=n+1}^\infty \frac{1}{3^{i+1}}(u_i(y_k)-u_i(x_k)) & \geq  \\ \frac{2}{3^{n+1}} -\sum\limits_{i=n+1}^\infty \frac{2}{3^{i+1}}= \frac{1}{3^{n+1}}.	\end{matrix}$$

		It follows that the oscillation of $f|A$ at $a$ is at least $3^{-n-1}$. In effect, $f|A$ has no continuous extension over $\bar{A}$, which means that $f$ is not uniformly continuous on $A$.

  \ss 

  Finally, if $(Z,d)$ is an arbitrary compact metric space and   $e: C\to Z $ is a homeomorphic embedding of $C$ into $Z$, then since $e^{-1}$ is uniformly continuous,  the  function $e\circ f:E\to Z$ has desired properties.

	\end{proof}
	
	
As an immediate corollary we obtain the following equivalent form of statement $C_8(\lambda,\kappa)$.

\begin{theorem}\label{C_8}

	For any uncountable cardinals $\kappa\leq\lambda\leq\co$ if  the cofinality of $\lambda$ is uncountable, then
		the following are equivalent:
	\begin{enumerate}
		\item $C_8(\lambda,\kappa)$,
	
		
		\smallskip

		




\item 	there exists  a set $E\sub \RR$ of cardinality $\lambda$, a compact metric space $Y$ and  a continuous function $f:E\to Y$, which is not uniformly continuous on any subset of $E$ of cardinality $\kappa$. 



 
	\end{enumerate}

Moreover,
 any set $E\sub I$ that witnesses $C_8(\lambda,\kappa)$ 
 for some continuous function  from $E$ to $I$  has the  property expressed in (2)
with respect to any uncountable compact metric space.
\end{theorem}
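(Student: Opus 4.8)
The plan is to read Theorem \ref{C_8} off Theorem \ref{theorem 1}, using the hypothesis $\mathrm{cf}(\lambda)>\omega$ in both directions to pass to a subset of full cardinality $\lambda$ that sits inside a \emph{compact} subinterval of $\RR$. This is the device that reconciles the metric-dependence of uniform continuity with the fact that $\RR$ itself is not compact, so that Theorem \ref{theorem 1} (which needs a compact ambient metric space) becomes applicable without distorting the Euclidean metric.

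For $(1)\Ra(2)$ I would begin from a witness $E\sub\RR$, $|E|=\lambda$, and a continuous $f\colon E\to\RR$ not uniformly continuous on any subset of cardinality $\kappa$. Writing $E=\bigcup_{m\in\NN}\{x\in E:|f(x)|\le m\}$ as a countable increasing union and invoking $\mathrm{cf}(\lambda)>\omega$, some piece $E'=\{x\in E:|f(x)|\le m\}$ has cardinality $\lambda$. Then $f|E'$ maps $E'$ into the compact metric space $Y=[-m,m]$ and, being a restriction of $f$, is still not uniformly continuous on any subset of $E'$ of cardinality $\kappa$; thus $(E',Y,f|E')$ is exactly a witness for (2).

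For $(2)\Ra(1)$ I would run the analogous reduction on the domain. From a witness $E\sub\RR$, $|E|=\lambda$, a compact metric space $Y$ and a continuous $f\colon E\to Y$, I write $E=\bigcup_{n\in\NN}(E\cap[-n,n])$ and choose $n$ with $|E\cap[-n,n]|=\lambda$, calling this set $E'$. Bounding the domain is essential: $E'$ now lies in the compact metric space $X=[-n,n]$, whose metric restricts to the usual metric of $\RR$, so "uniformly continuous on a subset of $E'$'' means the same thing in $X$ as in $\RR$ (this is precisely the step that would fail for unbounded $E$, since a compactification of $\RR$ carries a metric only topologically, not uniformly, equivalent to the Euclidean one). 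Next I would note that $E'$ is automatically zero-dimensional: if it contained a nondegenerate interval $J$, then $f|J$ would be continuous on the compact set $J$, hence uniformly continuous, and since $|J|=\co\ge\kappa$ this would contradict non-uniform-continuity of $f$ on $\kappa$-sized sets; so $E'$ has empty interior in $\RR$, hence a basis of clopen sets, hence $\dim E'=0$. Now Theorem \ref{theorem 1}, applied to $E'\sub X=[-n,n]$ and $f|E'\colon E'\to Y$, produces a continuous $g\colon E'\to C\sub\RR$ not uniformly continuous on any subset of $E'$ of cardinality $\kappa$; as $|E'|=\lambda$, the pair $(E',g)$ witnesses $C_8(\lambda,\kappa)$.

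For the "moreover'' clause no cofinality reduction is needed, since the set already sits in a compact space with the right metric. If $E\sub I$, $|E|=\lambda$, and $f\colon E\to I$ is continuous and not uniformly continuous on any subset of cardinality $\kappa$, then $E$ lies in the compact metric space $X=I$ with its Euclidean metric, and the interval-avoidance argument above again shows $\dim E=0$. Applying Theorem \ref{theorem 1} with this $X$, with $Y=I$, and with the given $f$ yields, for every uncountable compact metric space $Z$, a continuous function $E\to Z$ not uniformly continuous on any subset of $E$ of cardinality $\kappa$ — precisely property (2) for $E$ relative to $Z$. The only genuinely delicate point in the whole argument is the matching of metrics in $(2)\Ra(1)$, and the cofinality hypothesis is exactly what lets me sidestep it by confining the domain to a compact interval of $\RR$.
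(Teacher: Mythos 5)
Your proof is correct and takes essentially the same route as the paper's: both directions use $\mathrm{cf}(\lambda)>\omega$ to pass to a $\lambda$-sized piece confined to a compact interval (in the range for $(1)\Rightarrow(2)$, in the domain for $(2)\Rightarrow(1)$), establish zero-dimensionality by the no-nontrivial-interval argument, and then invoke Theorem \ref{theorem 1} (with $Z=I$) exactly as the paper does, including for the ``moreover'' clause. The only cosmetic slip is that a nondegenerate interval $J\subseteq E$ need not itself be compact; pass to a closed nondegenerate subinterval before concluding uniform continuity there.
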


\begin{proof}
	$(1)\Ra (2).$ If $f:E\w \RR$ is a continuous function on a set $E\sub \RR$ 
		of cardinality $\lambda$, which is not uniformly continuous on any subset of $E$ of cardinality $\kappa$, then since the cofinality of $\lambda$ is uncountable, by shrinking $E$, if necessary, we may assume that the range of $f$ is contained in a closed  interval $Y$ of length 1.
		
		 If additionally $E\sub I$ and $f:E\to I$,  then since $E$ contains no non-trivial interval, it is zero-dimensional, and Theorem \ref{theorem 1}  applies.
   
\ss 

 $(2)\Ra (1).$  
  Now let $f:E\to Y$ be  a continuous function with values in a compact metric space $Y$, which is not uniformly continuous on any subset of $E$ of cardinality $\kappa$. We may again assume that $E$ is contained in a closed interval $X$. Then, $E$ being zero-dimensional, it is enough to apply the final part of the assertion of Theorem \ref{theorem 1} to $Z=I$.

\end{proof}

By the results of \cite{C_8_vs_C_9} (cf. Section \ref{sec:1}), Theorems \ref{C_9} and \ref{C_8} lead to the following corollary.

\begin{cor}
	If either $\kappa=\lambda=\co$ or $\kappa=\lambda=\aleph_1$, then the following are equivalent:
	\begin{enumerate}
		\item 	There exist a set $E\sub \RR$ 
		of cardinality $\lambda$, a non-$\sigma$-compact Polish space $Y$, and  a continuous function $f:E\to Y$ which is not uniformly continuous (with respect to any complete metric on $Y$) on any subset of $E$ of cardinality $\kappa$.
		
		\item There exist  a set $E\sub \RR$ of cardinality $\lambda$, a compact metric space $Y$ and  a continuous function $f:E\to Y$, which is not uniformly continuous on any subset of $E$ of cardinality $\kappa$.

	\end{enumerate}
\end{cor}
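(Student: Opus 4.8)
The plan is to read the corollary as a mechanical chaining of the two theorems of this section with the diagonal equivalences recalled in Section~\ref{sec:1}, so no genuinely new argument is required; the whole content has already been deposited in Theorems~\ref{C_9} and~\ref{C_8} and in the results of \cite{C_8_vs_C_9}. First I would observe that condition (1) of the corollary is \emph{verbatim} condition (2) of Theorem~\ref{C_9}. Since in both cases under consideration we have $\kappa=\lambda$ with $\kappa,\lambda$ uncountable and $\lambda\leq\co$, the hypotheses of Theorem~\ref{C_9} are met, and that theorem gives
$$
(1)\ \Lra\ C_9(\lambda,\kappa).
$$

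Next I would identify condition (2) of the corollary with condition (2) of Theorem~\ref{C_8} and apply that theorem to obtain the equivalence with $C_8(\lambda,\kappa)$. The single hypothesis to verify is that the cofinality of $\lambda$ is uncountable. For $\lambda=\aleph_1$ this is immediate, as $\aleph_1$ is regular; for $\lambda=\co$ it follows from König's theorem, which yields $\mathrm{cf}(\co)=\mathrm{cf}(2^{\aleph_0})>\aleph_0$. Hence Theorem~\ref{C_8} applies in each of the two cases and gives
$$
(2)\ \Lra\ C_8(\lambda,\kappa).
$$

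Finally I would close the loop using the diagonal equivalences quoted from \cite{C_8_vs_C_9}: in the case $\kappa=\lambda=\co$ one has $C_8(\co,\co)\Lra C_9(\co,\co)$, and in the case $\kappa=\lambda=\aleph_1$ one has $C_8(\aleph_1,\aleph_1)\Lra C_9(\aleph_1,\aleph_1)$. Combining these with the two equivalences above produces the chain
$$
(1)\ \Lra\ C_9(\lambda,\kappa)\ \Lra\ C_8(\lambda,\kappa)\ \Lra\ (2),
$$
which is exactly the assertion. I do not expect a substantive obstacle here: the only point demanding any care is the cofinality check required to invoke Theorem~\ref{C_8} (note that Theorem~\ref{C_9}, by contrast, carries no cofinality restriction), and everything else is bookkeeping over the already-established theorems.
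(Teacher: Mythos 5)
Your proof is correct and is exactly the argument the paper intends: the corollary is stated there as an immediate consequence of Theorems \ref{C_9} and \ref{C_8} together with the equivalences $C_8(\co,\co)\Lra C_9(\co,\co)$ and $C_8(\aleph_1,\aleph_1)\Lra C_9(\aleph_1,\aleph_1)$ from \cite{C_8_vs_C_9}, chained precisely as you describe. Your explicit verification of the cofinality hypothesis of Theorem \ref{C_8} (regularity of $\aleph_1$, and K\"onig's theorem for $\lambda=\co$) is the only step the paper leaves tacit, and you handle it correctly.
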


\section{Constructing $C_8$-like examples from $\K$-Lusin sets}\label{sec:4}

 Throughout this section we assume that $G$ is an uncountable $G_\delta$-set in a  compact metric space $X$.

	Given a collection $\K$ of  compact sets  in 
	$X$ containing all singletons, we say that 
 an uncountable subset $E$ of  $G$ is a {\sl $\kappa$-$\K$-Lusin set in $G$}, where  $\aleph_1\leq \kappa\leq\co$, if $|E\cap K|<\kappa$  for every   set $K\in \K$. In particular, if $\K=K(G)$, the collection of all compact sets in $G$, then  a $\kappa$-$\K$-Lusin set in $G$ is just a $\kappa$-$K$-Lusin set in $G$.

 \ms

The  $C_8$-like examples, presented later in this section
by means of $\K$-Lusin sets,  are based on the following observation, applied also in the proofs of \cite[Theorems 3.8 and 3.9]{C_8_vs_C_9}. 

\begin{proposition}\label{C_8 from calK-Lusin}
	
	 Let $\varphi:{X}\w Y$ be  a continuous map onto a compact metric space $Y$ such that $\varphi|G$ is a homeomorphism onto $\varphi(G)$. Let ${\K}$ be a collection of  compact sets  in ${X}$ such that whenever $A\sub G$ and $\varphi|A$ extends to a homeomorphism over $\bar{A}$, the closure of $A$ in ${X}$, then $\bar{A}\in \K$.
	 
		If $T\sub G$ is a $\kappa$-$\K$-Lusin set in $G$ of cardinality $\lambda$, where $\aleph_1\leq \kappa\leq\lambda\leq\co$,
		 then letting $E=\varphi(T)$ and $f=\varphi^{-1}|E:E\to {X}$, we obtain a continuous function on a set of cardinality $\lambda$, which is not uniformly continuous on any subset of $E$ of cardinality $\kappa$.
		\begin{proof} 
				Aiming at a contradiction, assume that $f|B$ is uniformly continuous (with respect to any metric compatible with the topology of $Y$) on a set $B\sub E$ of cardinality $\kappa$ and let $A=f(B)=\varphi^{-1}(B)$. Then, since $\varphi|A:A\to B$ is also uniformly continuous, the function $\varphi|A$ extends to a homeomorphism over $\bar{A}$ (cf. \cite[Theorem 4.3.17]{eng-1}). Consequently, 
				$\bar{A}\in \K$. This, however, is impossible, since on one hand we have $|T\cap \bar{A}|<\kappa$, $T$ being a $\kappa$-$\K$-Lusin set in $X$,  but on the other hand $A\sub T\cap \bar{A}$ has cardinality $\kappa$.  
		\end{proof}
	
\end{proposition}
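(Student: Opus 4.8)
The plan is to argue by contradiction, assuming that the function $f=\varphi^{-1}|E$ is uniformly continuous on some subset $B\sub E$ with $|B|=\kappa$, and to derive a contradiction with the $\kappa$-$\K$-Lusin property of $T$. First I would transport the problematic set back to $G$ by setting $A=f(B)=\varphi^{-1}(B)\sub T$; note that $A$ has cardinality $\kappa$ since $f$ is injective (being the inverse of the homeomorphism $\varphi|G$). The central point is to show that the closure $\bar{A}$ of $A$ in $X$ belongs to the collection $\K$, for then the hypothesis $|T\cap \bar{A}|<\kappa$ clashes directly with $A\sub T\cap\bar{A}$ and $|A|=\kappa$.

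The mechanism for placing $\bar{A}$ in $\K$ is the defining property of $\K$: it suffices to verify that $\varphi|A$ extends to a homeomorphism over $\bar{A}$. To obtain this I would combine two uniform continuities. On one side, $f|B=\varphi^{-1}|B$ is uniformly continuous by assumption; on the other, $\varphi|A:A\to B$ is uniformly continuous because $\varphi$ is continuous on the compact space $X$ and hence uniformly continuous there, so its restriction to $A$ is as well. Thus $\varphi|A$ is a uniformly continuous bijection onto $B$ with uniformly continuous inverse, i.e.\ a uniform homeomorphism between $A$ and $B$. The standard extension theorem for uniformly continuous maps into complete spaces (for instance \cite[Theorem 4.3.17]{eng-1}) then yields a continuous extension of $\varphi|A$ over $\bar{A}$, and symmetry of the uniform-homeomorphism condition guarantees the extension is itself a homeomorphism onto its image.

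With $\bar{A}\in\K$ established, the contradiction is immediate and the proof closes: the $\kappa$-$\K$-Lusin hypothesis forces $|T\cap\bar{A}|<\kappa$, yet $A$ is a subset of $T\cap\bar{A}$ of cardinality exactly $\kappa$. I expect the main obstacle to be the careful justification that the extension of $\varphi|A$ over $\bar{A}$ is genuinely a homeomorphism rather than merely a continuous map — one must check that uniform continuity of both $\varphi|A$ and its inverse survives the passage to the closure, which is exactly what the cited extension theorem provides when both spaces sit inside complete metric spaces (here $X$ and $Y$ are compact, hence complete). Everything else is routine bookkeeping about cardinalities and the injectivity of $\varphi|G$.
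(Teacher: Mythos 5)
Your proof is correct and follows essentially the same route as the paper's: the same contradiction setup with $A=f(B)=\varphi^{-1}(B)$, the same appeal to uniform continuity of $\varphi|A$ (from compactness of $X$) together with the assumed uniform continuity of $f|B$ to extend $\varphi|A$ to a homeomorphism over $\bar{A}$ via the extension theorem \cite[Theorem 4.3.17]{eng-1}, and the same final clash between $\bar{A}\in\K$ and the $\kappa$-$\K$-Lusin property of $T$. Your added remarks on injectivity and on why the extension is a homeomorphism merely make explicit what the paper leaves to the cited theorem.
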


\subsection{Zero-dimensional spaces}
Throughout this subsection	let us additionally assume that the (compact metric) space $X$ is zero-dimensional.

\ss 

A  proof of the following fact is given in \cite[Lemma 4.2]{mazurkiewicz} (it is based on an idea similar to that in \cite[proof of Lemma 5.3]{eng-2}).
\begin{lemma}\label{compactification of zero-dim} For any $G_\delta$-set $G$ in $X$
		there is a continuous map $\varphi:{X}\to Y$ onto a compact metric space $Y$, such that
	 $\varphi|G$ is a homeomorphism onto $\varphi(G)$, $\varphi(X\setminus G)\cap \varphi(G)=\emptyset$ and the set $Y\setminus \varphi(G)$ is countable. 
\end{lemma}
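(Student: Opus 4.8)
The plan is to obtain $\varphi$ as the quotient map of an \emph{upper semicontinuous decomposition} of $X$ whose only nondegenerate members are countably many compact subsets of $X\setminus G$, each of which is crushed to a single point. The guiding case is that in which $G$ is open: then $X\setminus G$ is a single compact set and one simply collapses it to a point, the quotient $X/(X\setminus G)$ being a compact metric space into which $G$ embeds by the usual quotient-metric argument. The general task is to carry out such a collapse when $X\setminus G$ is merely $\sigma$-compact, while keeping the points of $G$ apart and adding only countably many points.

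First I would fix a defining sequence $X\setminus G=\bigcup_n F_n$ with $F_n$ compact and $F_n\subseteq F_{n+1}$. Each difference $F_{n+1}\setminus F_n$ is open in the compact zero-dimensional set $F_{n+1}$, hence is locally compact, $\sigma$-compact and zero-dimensional, and therefore splits into a countable disjoint family of compact, relatively clopen pieces. Collecting these over all $n$ (together with $F_0$) presents $X\setminus G$ as a countable disjoint union of compact pieces, each contained in a single difference $F_{n+1}\setminus F_n$ or in $F_0$; call the index $n$ of that difference the exit level of the piece. Using zero-dimensionality of $X$, I would then refine this family by finite clopen partitions \emph{adapted to $(F_n)$}, subdividing every piece of exit level $n$ into clopen subpieces of diameter $<2^{-n}$. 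This yields a countable family $\{E_k\}$ of pairwise disjoint compact subsets of $X\setminus G$ covering it, with the crucial feature that the pieces made of points lying deep in the sets $U_m=X\setminus F_m$ (equivalently, close to $G$) have small diameter.

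Let $\mathcal{E}=\{E_k:k\in\NN\}\cup\{\{x\}:x\in G\}$ and let $\varphi\colon X\to Y$ be the associated quotient map, so that $\varphi$ is injective on $G$ and sends each $E_k$ to a point. Granting that $\mathcal{E}$ is upper semicontinuous, $Y$ is compact Hausdorff, and, being second countable (only countably many members of $\mathcal{E}$ are nondegenerate), it is metrizable. The set $\varphi(X\setminus G)=\{\varphi(E_k):k\in\NN\}$ is countable and, the classes $E_k$ being distinct from the singletons $\{x\}$ with $x\in G$, it is disjoint from $\varphi(G)$; hence $Y\setminus\varphi(G)=\varphi(X\setminus G)$ is countable. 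That $\varphi|G$ is a homeomorphism onto its image can be checked directly from the quotient metric, the distance from $\varphi(x)$ to a collapsed point being controlled by the distance of $x$ to the corresponding piece.

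The main obstacle is exactly the verification that $\mathcal{E}$ is upper semicontinuous, which is where the continuity of $\varphi$ truly resides. The clean part concerns the boundary points of $G$, i.e.\ the points of $G$ that are limits of $X\setminus G$: if $x_j\to g\in G$ with $x_j\in E_{k_j}$, then, since $g\in\bigcap_m U_m$ and each $U_m$ is an open neighbourhood of $g$, the exit levels of the $x_j$ tend to infinity, so by the adapted subdivision $\mathrm{diam}(E_{k_j})\to 0$ and $E_{k_j}$ converges to $\{g\}$, which is precisely what upper semicontinuity demands there. The remaining case, accumulation of pieces at points of $X\setminus G$ across several exit levels, is the delicate bookkeeping: it must be handled by also refining through finite clopen partitions of $X$ of mesh $\to 0$, so that near any point only finitely many pieces of a given size occur. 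This combinatorial control, made possible by zero-dimensionality, is the technical heart of the argument.
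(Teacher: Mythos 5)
Your overall strategy --- collapsing countably many pairwise disjoint compact pieces of $X\setminus G$ to points via an upper semicontinuous decomposition and taking the quotient --- is the right one, and it is in the spirit of the proof the paper relies on (the paper gives no proof of its own; it cites \cite[Lemma 4.2]{mazurkiewicz}). But there is a genuine gap exactly at the step you yourself defer as the ``technical heart'': upper semicontinuity at points of $X\setminus G$ is never proved, and the construction as you actually specify it (pieces of exit level $n$ clopen in their difference and of diameter $<2^{-n}$, otherwise arbitrary) does not imply it. Concretely, let $F_0=\{0\}\cup\{2/5\}$ and $F_1=F_0\cup\bigcup_{j\ge 3}A_j$ with $A_j=\{1/j,\,1/j+2/5\}$, realized inside any zero-dimensional compact metric $X$ containing $F_1$ as a clopen subset, with $G=X\setminus F_1$ and defining sequence $F_0\subseteq F_1\subseteq F_1\subseteq\cdots$. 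Splitting $F_0$ into the two singleton pieces $\{0\}$, $\{2/5\}$ and $F_1\setminus F_0$ into the pieces $A_j$ conforms to your recipe (each $A_j$ is compact, relatively clopen, of diameter $2/5<1$), yet the decomposition is not upper semicontinuous: the $A_j$ accumulate simultaneously on $0$ and on $2/5$, so every saturated open set containing $0$ contains $A_j$ for large $j$ and hence points near $2/5$, and vice versa. The quotient is then not even Hausdorff, so no compact metric $Y$ arises. Your proposed repair (``finite clopen partitions of mesh $\to 0$ so that near any point only finitely many pieces of a given size occur'') names the right target but is not carried out, and mesh control alone does not rule out the phenomenon above, which occurs within a single exit level.

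The missing idea, which also makes your ``clean case'' at points of $G$ unnecessary as a separate case, is to arrange the pieces to form a \emph{null family}: for every $\varepsilon>0$ only finitely many pieces in the entire family have diameter $\ge\varepsilon$. This is easy with your tools: enumerate the pieces of exit level $n$ as $E_{n,0},E_{n,1},\dots$ and subdivide each $E_{n,i}$ into finitely many relatively clopen compact pieces of diameter $<2^{-\max(n,i)}$; then for each $\varepsilon$ only the finitely many pairs $(n,i)$ with $2^{-\max(n,i)}\ge\varepsilon$ can contribute pieces of diameter $\ge\varepsilon$, finitely many each. A disjoint null family of compacta, together with the singletons of the complement, is always an upper semicontinuous decomposition: if the condition failed there would be a member $D$, an open $U\supseteq D$, members $D_j$ and points $x_j\in D_j$, $y_j\in D_j\setminus U$ with $x_j\to x\in D$; after passing to a subsequence either one member $D'$ repeats infinitely often, whence $x\in D'\cap D$ forces $D'=D$ and $y_j\in D\setminus U=\emptyset$, a contradiction, or the $D_j$ are pairwise distinct, whence nullness gives $\mathrm{diam}\,D_j\to 0$, so $y_j\to x\in U$, contradicting $y_j\notin U$. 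This single argument handles accumulation at points of $G$ and of $X\setminus G$ simultaneously. With upper semicontinuity established, the rest of your proof (the quotient is compact metrizable as a Hausdorff continuous image of a compact metric space, the remainder is countable, and $\varphi|G$ is a homeomorphism because distinct members have distinct images) goes through essentially as you indicate.
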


Various $C_8$-like examples 
 could be constructed with the help of 
 Lemma \ref{compactification of zero-dim} and the following observation. By a {\sl $\sigma$-ideal on X} we mean a collection $\I$ of Borel sets in $X$, closed under taking Borel subsets and
 countable unions of elements of $\I$, and  containing all singletons.

\begin{proposition}\label{zero-dim examples from ideals}
 Let  $\I$ be a \s-ideal on ${X}$ 
  and let $\K$ be the collection of all compact sets from $\I$. 
	
	If  $G$ is a $G_\delta$ set in $X$ such that $G\in \I$ and $\varphi:{X}\w Y$ is a continuous map described in Lemma \ref{compactification of zero-dim},
	then $\bar{A}\in \K$ for any $A\sub G$ such that $\varphi|A$ extends to a homeomorphism onto $\bar{A}$.
	
	Consequently,	if $T\sub G$ is a $\kappa$-$\K$-Lusin set in $G$ of cardinality $\lambda$, where $\aleph_1\leq \kappa\leq\lambda\leq\co$,
	then letting $E=\varphi(T)$ and $f=\varphi^{-1}|E:E\to {X}$, we obtain a continuous function on a set of cardinality $\lambda$, which is not uniformly continuous on any subset of $E$ of cardinality $\kappa$.
	
\end{proposition}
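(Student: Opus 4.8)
The plan is to reduce the whole proposition to Proposition~\ref{C_8 from calK-Lusin}. Indeed, the map $\varphi$ supplied by Lemma~\ref{compactification of zero-dim} is a continuous surjection onto a compact metric space $Y$ with $\varphi|G$ a homeomorphism onto $\varphi(G)$, so it already satisfies the standing hypotheses of that proposition. Hence the ``Consequently'' part will follow at once, provided I first establish the opening assertion: that $\bar{A}\in\K$ whenever $A\sub G$ and $\varphi|A$ extends to a homeomorphism over $\bar{A}$. This opening assertion is precisely the condition on $\K$ demanded by Proposition~\ref{C_8 from calK-Lusin}, so the entire statement rests on it, and that is where all the work lies.

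To prove $\bar{A}\in\K$, recall that $\K$ consists of the compact members of $\I$. Since $X$ is compact and $\bar{A}$ is closed, $\bar{A}$ is automatically compact, so it suffices to show $\bar{A}\in\I$. I would split $\bar{A}=(\bar{A}\cap G)\cup(\bar{A}\setminus G)$ and treat the two pieces separately. The first piece $\bar{A}\cap G$ is a Borel subset of $G$ (being the intersection of the closed set $\bar{A}$ with the $G_\delta$-set $G$), and $G\in\I$ by assumption; since $\I$ is closed under Borel subsets, $\bar{A}\cap G\in\I$.

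The heart of the argument is the second piece $\bar{A}\setminus G$. First I would observe that the given extension of $\varphi|A$ over $\bar{A}$ must coincide with the restriction $\varphi|\bar{A}$: both are continuous on $\bar{A}$, agree on the dense subset $A$, and $Y$ is Hausdorff. Consequently $\varphi|\bar{A}$ is itself a homeomorphism onto its image, in particular injective. Next, using Lemma~\ref{compactification of zero-dim}, every point $x\in\bar{A}\setminus G$ lies in $X\setminus G$, so $\varphi(x)\in\varphi(X\setminus G)$, which is disjoint from $\varphi(G)$; thus $\varphi(\bar{A}\setminus G)\sub Y\setminus\varphi(G)$. Since $Y\setminus\varphi(G)$ is countable, injectivity of $\varphi|\bar{A}$ forces $\bar{A}\setminus G$ to be countable as well. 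A countable set belongs to $\I$ (it is a countable union of singletons, all of which lie in $\I$), so $\bar{A}\setminus G\in\I$. Combining the two pieces, $\bar{A}\in\I$, and therefore $\bar{A}\in\K$, as required.

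I do not expect a serious obstacle; the argument is short once the right decomposition is in place. The one step that must be handled with care---and the crux of the whole statement---is the passage from ``$\varphi|A$ extends to a homeomorphism over $\bar{A}$'' to the countability of $\bar{A}\setminus G$: it is exactly the conjunction of injectivity of $\varphi|\bar{A}$ with the two special features of $\varphi$ from Lemma~\ref{compactification of zero-dim} (namely $\varphi(X\setminus G)\cap\varphi(G)=\emptyset$ and the countability of $Y\setminus\varphi(G)$) that renders the exceptional part $\bar{A}\setminus G$ negligible for the ideal $\I$. Once $\bar{A}\in\K$ is in hand, the final conclusion is a direct application of Proposition~\ref{C_8 from calK-Lusin} to the $\kappa$-$\K$-Lusin set $T$.
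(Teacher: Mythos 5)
Your proof is correct and follows essentially the same route as the paper's: the paper also reduces everything to Proposition~\ref{C_8 from calK-Lusin} and shows $\bar{A}\in\K$ by noting that the extension coincides with $\varphi$ on $\bar{A}$, so that $\bar{A}\setminus G=\tilde{\varphi}^{-1}\bigl(\overline{\varphi(A)}\setminus\varphi(G)\bigr)$ is countable, whence $\bar{A}\sub G\cup(\bar{A}\setminus G)\in\I$. Your decomposition into $\bar{A}\cap G$ and $\bar{A}\setminus G$ is just a cosmetic variant of this, and you correctly identify the injectivity-plus-countability step as the crux.
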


\begin{proof} Let us fix $A\sub G$ such that $\varphi|A$ extends to a homeomorphism
$\tilde{\varphi}$ between $\bar{A}$ and $\overline{\varphi(A)}$. Then the set
$\bar{A}\setminus G= {\tilde{\varphi}}^{-1}\bigl(\overline{\varphi(A)}\setminus \varphi(G)\bigr)$ is countable, so $\bar A\sub G\cup (\bar{A}\setminus G)\in \I$, hence $\bar{A}\in \K$.

The final assertion follows directly from Proposition \ref{C_8 from calK-Lusin}.

\end{proof}

As the following proposition shows, the above observation could be applied to various natural \s-ideals. Let us recall that un uncountable set $T$ in a Polish space $Y$ is a {\sl Lusin set in $Y$}, if $|T\cap D|<\aleph_1$ for every closed nowhere-dense subset of $Y$. 

\begin{proposition}\label{Solecki} 
 Let  $\I$ be a \s-ideal on ${X}$, let $\K$ be the collection of all compact sets from $\I$ and let us assume that $\I$ is not generated by $\K$ (i.e., there is a  set from $I$ which is not covered by any $F_\sigma$-set from $\I$).
  Then, there exists a $G_\delta$-set in $X$ such that $G\in \I$ but no non-empty, relatively open set in $G$ is covered by an $F_\sigma$-set from $\I$. Consequently, every Lusin set $T$ in $G$ is $\aleph_1$-$\K$-Lusin in $G$ hence, it gives rise to a $C_8$-example (as described in Proposition \ref{zero-dim examples from ideals}). 
  \end{proposition}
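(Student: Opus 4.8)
The plan is to isolate the existence of the $G_\delta$-set as the sole nontrivial point and to reduce everything else to elementary bookkeeping. Throughout write $\J$ for the collection of all Borel sets covered by an $F_\sigma$-set from $\I$; since $X$ is compact, the closed subsets of members of $\I$ are exactly the sets of $\K$, so $\J$ is precisely the $\s$-ideal generated by $\K$, and the hypothesis says exactly that $\I\neq\J$. I first record the easy observation that makes the final ("Consequently") clause automatic. Suppose $G$ is a $G_\delta$-set with $G\in\I$ such that no non-empty relatively open subset of $G$ is covered by an $F_\sigma$-set from $\I$. Then every $K\in\K$, being a closed member of $\I$, meets $G$ in a relatively closed set with empty interior in $G$, i.e.\ in a closed nowhere-dense subset of $G$; otherwise $K\cap G$ would contain a non-empty relatively open $U\sub G$ with $U\sub K$, contradicting the assumption. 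Hence for a Lusin set $T\sub G$ one has $|T\cap K|=|T\cap(K\cap G)|<\aleph_1$ for every $K\in\K$, so $T$ is an $\aleph_1$-$\K$-Lusin set in $G$, and Proposition~\ref{zero-dim examples from ideals} yields the announced $C_8$-example.

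For the existence of $G$ I would first reduce to producing merely \emph{some} $G_\delta$-set $G_0\in\I$ with $G_0\notin\J$. Given such a $G_0$, fix a countable base $(V_n)$ of clopen sets of $X$, put $W=\bigcup\{V_n:\ G_0\cap V_n\in\J\}$, and set $G=G_0\setminus W=G_0\cap(X\setminus W)$. Then $G$ is a $G_\delta$-set, being the intersection of the $G_\delta$-set $G_0$ with the closed (hence $G_\delta$) set $X\setminus W$; it is a Borel subset of $G_0$, so $G\in\I$; and $G\notin\J$, because $G_0\cap W=\bigcup\{G_0\cap V_n:\ G_0\cap V_n\in\J\}\in\J$ while $G_0\notin\J$. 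The choice of $W$ forces the crucial local property: if some $V_n$ met $G$ with $G\cap V_n\in\J$, then $G_0\cap V_n=(G_0\cap V_n\cap W)\cup(G\cap V_n)\in\J$ would put $V_n\sub W$ and hence $G\cap V_n=\emptyset$, a contradiction. Thus $G\cap V_n\notin\J$ whenever $G\cap V_n\neq\emptyset$, and since every non-empty relatively open subset of $G$ contains some non-empty $G\cap V_n$, no such subset is covered by an $F_\sigma$-set from $\I$.

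It remains to find a $G_\delta$-set in $\I\setminus\J$, and this is the step I expect to be the main obstacle. Starting from a Borel set $S\in\I\setminus\J$ and applying the removal trick above to $S$ itself, I may assume $S$ is everywhere $\J$-positive, i.e.\ $S\cap V_n\notin\J$ whenever $S\cap V_n\neq\emptyset$; then $S$ is dense in the compact set $P=\overline S$ and every $F_\sigma$-set from $\I$ is meager in $P$ (its complement contains the dense set $S$ minus the $F_\sigma$-set, which still meets every $V_n$ that meets $S$). If $S$ happens to be non-meager in $P$, the Baire property gives a non-empty open $O\sub P$ in which $S$ is comeager, hence a dense $G_\delta$-subset $H$ of $O$ with $H\sub S$; as $O$ is $G_\delta$ in $X$, so is $H$, we have $H\in\I$ (a Borel subset of $S$), and $H\notin\J$, since a set comeager in $O$ cannot be contained in a single $F_\sigma$-set from $\I$, all of which are meager in $O$. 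The genuinely hard case is when $S$ is meager in its own closure, where the naive category argument collapses and an infinite nowhere-dense descent threatens; here I would invoke Solecki's theorem on covering analytic sets by $F_\sigma$-members of a $\s$-ideal generated by closed sets, which supplies a $G_\delta$-subset of $S$ lying outside $\J$. This descriptive-set-theoretic input is the crux; once it is available, the two preceding paragraphs complete the proof.
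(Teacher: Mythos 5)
Your proposal is correct and takes essentially the same route as the paper: the paper also gets a $G_\delta$-set $G\sub B$ in $\I$ but outside the $\sigma$-ideal generated by $\K$ directly from Solecki's covering theorem, then says ``by shrinking $G$, if necessary, we may assume that $G$ has the desired properties'' --- which is exactly your clopen-base removal argument spelled out --- and finishes with the same observation that each $K\in\K$ meets $G$ in a nowhere-dense set, so Lusin sets in $G$ are $\aleph_1$-$\K$-Lusin. Your Baire-category case distinction is superfluous (Solecki's theorem, which you must invoke anyway for the meager case, handles both cases uniformly), but it does not affect correctness.
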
 
	\begin{proof}
		Let $B\in \I$ be  (a Borel) set not covered by any $F_\sigma$-set from $\I$. Then the existence of a $G_\delta$-set in $X$ such that $G\sub B$ but  $G$ is not covered by any $F_\sigma$-set from $\I$ follows from a theorem of Solecki (see \cite{sol}). By shrinking $G$, if necessary, we may assume that   $G$ has the desired properties. 
		
		It follows that if $K\in\K$, then $G\cap K$ is meager in $G$, and, consequently, $T\cap K$ is countable for any Lusin set $T$ in $G$. 
	\end{proof}

\begin{remark} 
A typical example of the situation described in Propositions \ref{zero-dim examples from ideals} and \ref{Solecki} is 	when $X$ is a copy of the Cantor in $\RR$ of positive Lebesgue measure, $\I$ is the \s-ideal of Lebesgue measure zero Borel sets in $X$ (then $\K$ is the family of  closed Lebesgue measure zero sets in $X$) and  $G$ is a dense copy of irrationals in $X$ of Lebesgue measure zero. 

Then the function $f=\varphi^{-1}|H:H\to \RR$, where $H=\varphi(G)$, is a homeomorphic embedding of $H$,  a copy of irrationals, to $\RR$, with the property that for every Lusin set $L$ of cardinality $\co$ in $H$, the function $f|L$ is not uniformly continuous on any uncountable subset of $L$. This provides an alternative proof of the theorem of Sierpiński that the existence of a Lusin set of cardinality $\co$ in $\PP$ implies $C_8$ (cf. \cite[proof of Th\'eor\`eme 6 on page 45]{Si}).
\end{remark}

\subsection{Infinite-dimensional spaces}
The assumption that the space $X$ is zero-dimensional  in Theorem \ref{theorem 1}  is essential, as demonstrated by the following result (CH in this theorem can be weakened to the assumption that no family of less than $\co$ meager sets covers $\RR$, cf. Remark \ref{cov(M)}).

\begin{theorem}\label{C_8 in inf dim}
	Assuming CH, there exists a set  $E\sub I^\NN$ of cardinality $\co$ such that 
	\begin{enumerate}
		\item there is a continuous function $f:E\to I^\NN$, which is not uniformly continuous on any subset of $E$ of cardinality $\co$,
		
		\item each continuous function $g:E\to \RR$ is constant on a subset of $E$  of cardinality $\co$.
	\end{enumerate}
\end{theorem}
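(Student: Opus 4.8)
The plan is to realize the pair $(E,f)$ through the general scheme of Proposition \ref{C_8 from calK-Lusin}, taking for the ambient compactum a \emph{Henderson compactum} $M\sub I^\NN$: an infinite-dimensional, hereditarily indecomposable metric continuum all of whose nondegenerate subcontinua are strongly infinite-dimensional. Since $M\sub I^\NN$, any map $f=\varphi^{-1}|E:E\to M$ delivered by the scheme is automatically a map into $I^\NN$, so it suffices to produce a $G_\delta$-set $G\sub M$, a continuous surjection $\varphi:M\to Y$ onto a compact metric space $Y$ with $\varphi|G$ a homeomorphism, a collection $\K$ of compact sets meeting the hypothesis of Proposition \ref{C_8 from calK-Lusin}, and a $\co$-$\K$-Lusin set $T\sub G$ of cardinality $\co$ with the extra feature that \emph{every} continuous real-valued function on $T$ is constant on a subset of cardinality $\co$. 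Indeed, putting $E=\varphi(T)$, property (1) will follow verbatim from Proposition \ref{C_8 from calK-Lusin}, while a continuous $g:E\to\RR$ corresponds, via the homeomorphism $\varphi|T$, to a continuous $\tilde g=g\circ(\varphi|T):T\to\RR$, so property (2) is precisely the extra feature of $T$.

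The dimension theory of $M$ is what reconciles the two demands on $T$. I would let $\K$ be the family of all $\sigma$-finite-dimensional compact subsets of $M$ and use two facts: (a) a strongly infinite-dimensional compactum is never $\sigma$-finite-dimensional, so inside any nondegenerate subcontinuum of $M$ every member of $\K$ is nowhere dense, hence meager; and (b) the dimension-lowering theorem for the line — a continuous map of a strongly infinite-dimensional compactum to $\RR$ has a strongly infinite-dimensional fiber — whence every real-valued continuous function on a strongly infinite-dimensional subcontinuum of $M$ is constant on a strongly infinite-dimensional, and therefore non-$\sigma$-finite-dimensional, subcontinuum. I would then pick $G$ so that it contains no strongly infinite-dimensional compactum (equivalently, every compact subset of $G$ belongs to $\K$) while still meeting the subcontinua above in non-$\sigma$-finite-dimensional sets, and build $\varphi$ — an infinite-dimensional counterpart of Lemma \ref{compactification of zero-dim} — to be a homeomorphism on $G$ that tears apart every compactum reaching into $M\setminus G$. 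With these choices the hypothesis of Proposition \ref{C_8 from calK-Lusin} holds: whenever $\varphi|A$ extends to a homeomorphism over $\bar A$, the set $\bar A$ cannot meet $M\setminus G$, so $\bar A\sub G$ is $\sigma$-finite-dimensional, i.e. $\bar A\in\K$.

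Finally I would construct $T=\{x_\xi:\xi<\co\}$ by transfinite recursion of length $\co$; this is where CH, or merely the weaker hypothesis that fewer than $\co$ meager sets never cover the line (cf. Remark \ref{cov(M)}), enters. By Lavrentiev's theorem every continuous $g:T\to\RR$ extends to a continuous function on a $G_\delta$-set containing $T$, and there are only $\co$ such partial functions; I would enumerate these together with all members of $\K$, with a bookkeeping in which each partial function is attended to cofinally often. At a stage assigned to a partial function I fix, once and for all, a strongly infinite-dimensional subcontinuum $C$ of $M$ lying in its domain on which it is constant (using (b)), and then choose the new point $x_\xi\in C\cap G$ so as to avoid the union of the fewer than $\co$ members of $\K$ accumulated so far; this is possible because, by (a), those sets are meager in $C$ while $C\cap G$ is non-$\sigma$-finite-dimensional, so by the covering hypothesis they do not exhaust $C\cap G$. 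The recursion keeps $|T\cap K|<\co$ for every $K\in\K$, so $T$ is $\co$-$\K$-Lusin and property (1) holds, and it places $\co$ points of $T$ into one fiber of each partial function, so every $g$ is constant on a set of cardinality $\co$ and property (2) holds.

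The main obstacle is exactly this reconciliation of two opposing requirements on $T$ — global thinness against every member of $\K$, needed for the failure of uniform continuity, versus local thickness on a single fiber of every real-valued function, needed for constancy — and the entire argument hinges on the dimension-theoretic dichotomy for $M$: the fibers one must fill are strongly infinite-dimensional, hence non-$\sigma$-finite-dimensional, hence invisible to $\K$. A secondary technical point I expect to require care is invoking the dimension-lowering theorem (b) for functions that are a priori defined only on a $G_\delta$-subset of $M$ rather than on a subcompactum, which is precisely why $G$ has to be chosen with the recursion in mind — small on compacta, yet rich along strongly infinite-dimensional continua.
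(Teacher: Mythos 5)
Your proposal reproduces the paper's general skeleton (Proposition \ref{C_8 from calK-Lusin}, an infinite-dimensional analogue of Lemma \ref{compactification of zero-dim}, Lavrentiev extensions, a recursion of length $\co$ under CH or ${\rm cov}(\M)=\co$), but the mechanism you propose for property (2) has a genuine gap, and the gap is forced by your own requirements for property (1). To apply Proposition \ref{C_8 from calK-Lusin} you need: whenever $A\sub G$ and $\varphi|A$ extends to a homeomorphism over $\bar{A}$, then $\bar{A}\in\K$. Taking $A=K$ for an arbitrary compactum $K\sub G$ (note that $\varphi|K$ is automatically a homeomorphism, since $\varphi|G$ is one), this forces \emph{every} compact subset of $G$ to lie in $\K$; with $\K$ the $\sigma$-finite-dimensional compacta of a Henderson compactum, the sum theorem makes every such compactum zero-dimensional, so $G$ is necessarily punctiform---it contains no nondegenerate subcontinua at all. (In passing, ``contains no strongly infinite-dimensional compactum'' is \emph{not} equivalent to ``every compact subset is $\sigma$-finite-dimensional'': there exist weakly infinite-dimensional compacta that are not countable-dimensional.) Now take a continuous $g:T\to\RR$. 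Your recursion secures constancy only for those enumerated partial functions whose domain contains a strongly infinite-dimensional subcontinuum on which the function is constant; but $g$ Lavrentiev-extends over a $G_\delta$-set $D$ with $T\sub D$, and after replacing $D$ by $D\cap G$ the domain lies inside the punctiform set $G$ and contains no subcontinuum whatsoever, so your fact (b) has nothing to act on, while the ``spoiling'' alternative (placing a point of $T$ in $(M\setminus D)\cap G$) is unavailable when $D=G$. You give no argument that every $g$ admits at least one continuum-rich extension that the recursion has handled, and nothing forces such extensions to exist. This is exactly the difficulty the paper's Lemma \ref{key lemma} is designed to overcome: it constructs a \emph{special} punctiform $G_\delta$-set $M$ in a Henderson compactum such that every continuous real-valued function on any $M'\sub M$ with ${\rm dim}\,(M\setminus M')\leq 0$ is constant on a set of \emph{positive} dimension, and then builds $T$ to meet every positive-dimensional Borel subset of $M$ in $\co$ points---compatible with $\K$-Lusin-ness precisely because positive-dimensional subsets of a punctiform space are never $\sigma$-compact. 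The proof of that lemma (Mazurkiewicz-type punctiform sets of dimension $\geq 2$, the tower of copies $h_n(S)$, separation arguments) is the technical heart of the theorem; it is the step you defer as a ``secondary technical point'', and it cannot be recovered from compactum-level fiber theorems.

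Two further steps would also fail as written. First, no choice of $\varphi$ can ``tear apart every compactum reaching into $M\setminus G$'': the property $\varphi(M\setminus G)\cap\varphi(G)=\emptyset$ only prevents identifications \emph{across} the two sets, so $\varphi$ is injective on any two-point set, or on a convergent sequence in $G$ together with its limit in $M\setminus G$; hence injectivity of $\varphi|\bar{A}$ cannot imply $\bar{A}\sub G$. The paper instead lets $\bar{A}$ leave the $G_\delta$-set and controls ${\rm dim}\,(\bar{A}\setminus M)$ through the structure of $Y\setminus\varphi(M)$ (a countable union of finite-dimensional compacta), then combines the Henderson property with punctiformity, the enlargement theorem and the addition theorem to get ${\rm dim}\,\bar{A}\leq 1$, hence $=0$. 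Second, your fact (b)---that a continuous map of a strongly infinite-dimensional compactum to $\RR$ has a strongly infinite-dimensional fiber---is asserted without proof and is not among the standard dimension-lowering theorems (the known fibration-type results in infinite-dimensional theory concern C-spaces, and the corresponding statements for weak infinite-dimensionality are delicate); the paper avoids any such statement by working throughout with positive-dimensional, rather than strongly infinite-dimensional, fibers, which is what makes its Lemma \ref{key lemma} provable.
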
 

A key element of the proof of this theorem is a 
{\sl Henderson compactum} -- a compact metrizable infinite-dimensional space
whose all compact subsets of finite dimension are zero-dimensional, cf. \cite{vM}. 

More specifically, we shall need the following fact, where 
{\sl punctiform} sets are the sets without non-trivial subcontinua, cf. \cite[1.4.3]{eng-2}. 

\begin{lemma}\label{key lemma}
	There exists a non-empty punctiform $G_\delta$-set $M$ in a Henderson compactum $H\sub I^\NN$ such that for each $M'\sub M$ with ${\rm dim}\ (M\setminus M')\leq 0$, every continuous function $u:M'\to \RR$ is constant on a set of positive dimension.
\end{lemma}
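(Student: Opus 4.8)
The plan is to realise $M$ as a punctiform $G_\delta$-subset of the Henderson compactum $H$ which is infinite-dimensional and has a zero-dimensional complement in $H$, and then to read off the rigidity property from a Hurewicz-type fibre argument in which the defining feature of $H$ is used to upgrade positive-dimensional fibres into infinite-dimensional ones.

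First I would construct $M$. Since a compact punctiform metric space is totally disconnected and hence zero-dimensional, every non-degenerate subcontinuum of $H$ is infinite-dimensional; in particular each compact subset of $H$ is either zero-dimensional or infinite-dimensional. The aim is to produce a zero-dimensional $F_\sigma$-set $Z\sub H$ meeting every non-degenerate subcontinuum of $H$, and to set $M=H\setminus Z$. Then $M$ is a $G_\delta$-set and is punctiform, because a non-degenerate subcontinuum contained in $M$ would be disjoint from $Z$; moreover the addition theorem gives $\dim H\le\dim M+\dim Z+1=\dim M+1$, so $M$ is infinite-dimensional (in particular non-empty). I expect the construction of such a transversal $Z$ to be the main obstacle: an arbitrary separator between two disjoint closed sets in an infinite-dimensional space is itself infinite-dimensional, so $Z$ cannot be assembled naively out of separators. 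The point is precisely that the continua to be met are all infinite-dimensional, hence ``large enough'' to be caught by a lower-dimensional $F_\sigma$-set while leaving an infinite-dimensional remainder; here I would lean on the structure of $H$ and on the construction methods underlying Henderson compacta (cf. \cite{vM}) and the auxiliary compactifications of \cite{mazurkiewicz}.

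Granting such an $M$, the rigidity property is the clean part. Let $M'\sub M$ with $\dim(M\setminus M')\le 0$ and let $u:M'\to\RR$ be continuous; composing with a homeomorphism $\RR\cong(0,1)$ I may assume $u$ takes values in $(0,1)$. By the extension theorem for continuous maps I would extend $u$ to a continuous $\hat u:\hat M\to(0,1)$ on a $G_\delta$-set $\hat M$ with $M'\sub\hat M\sub M$. Let $\Gamma$ be the closure in the compact space $H\times[0,1]$ of the graph of $\hat u$, and let $q:\Gamma\to[0,1]$ be the projection onto the second coordinate; being a continuous map of a compactum, $q$ is closed. Since $\Gamma$ contains a homeomorphic copy of $\hat M\supseteq M'$ and, by the addition theorem applied to $M=M'\cup(M\setminus M')$, one has $\dim M'=\infty$, the space $\Gamma$ is infinite-dimensional, so Hurewicz's inequality $\dim\Gamma\le\dim q(\Gamma)+\sup_{c}\dim q^{-1}(c)$ together with $\dim q(\Gamma)\le 1$ forces $\sup_{c}\dim q^{-1}(c)=\infty$. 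Now each fibre $q^{-1}(c)$ is a compact subset of $H\times\{c\}$, identified with a compact subset of $H$, hence is zero-dimensional or infinite-dimensional; were all fibres zero-dimensional the supremum would be $0$. Thus $q^{-1}(c_0)$ is infinite-dimensional for some $c_0$.

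Finally I would transfer this back to $u$. Over points of $\hat M$ the set $\Gamma$ coincides with the graph of $\hat u$ (by continuity of $\hat u$), so the part of $q^{-1}(c_0)$ not arising from $\hat u^{-1}(c_0)$ projects into $H\setminus\hat M\sub(H\setminus M)\cup(M\setminus M')$, a set of dimension $\le 1$ (both pieces having dimension $\le 0$). Removing this finite-dimensional subset from the infinite-dimensional $q^{-1}(c_0)$ leaves, by the addition theorem, an infinite-dimensional set, so $\hat u^{-1}(c_0)$ is infinite-dimensional; deleting from it the further zero-dimensional set $\hat M\setminus M'$ shows, again by the addition theorem, that $u^{-1}(c_0)=\hat u^{-1}(c_0)\cap M'$ is infinite-dimensional, in particular of positive dimension. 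Hence $u$ is constant on a set of positive dimension, as required. The two places where the Henderson hypothesis is genuinely used are the construction of $M$ (reconciling ``punctiform'' with ``infinite-dimensional'') and the dichotomy that turns a positive-dimensional fibre into an infinite-dimensional one.
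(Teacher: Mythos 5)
Your second half---the rigidity argument---is correct as written and is genuinely different from the paper's: extending $u$ to $\hat u$ on a $G_\delta$-set $\hat M$ with $M'\sub \hat M\sub M$, taking the closure $\Gamma$ of the graph of $\hat u$ in $H\times[0,1]$, and playing Hurewicz's dimension-lowering theorem for the closed projection $q:\Gamma\to[0,1]$ against the Henderson dichotomy (compact subsets of $H$ are zero-dimensional or infinite-dimensional) to get an infinite-dimensional fibre, then peeling off $H\setminus\hat M$ and $\hat M\setminus M'$ by the addition theorem, is clean and would prove even more than the lemma asks (an infinite-dimensional level set). But this argument consumes an input you never produce: a Henderson compactum $H$ containing a punctiform, infinite-dimensional $G_\delta$-set $M$ with ${\rm dim}\,(H\setminus M)\leq 0$. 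You correctly flag the construction of the transversal $Z=H\setminus M$ as ``the main obstacle'' and then defer it entirely to the structure of Henderson compacta; that deferral is the gap, and it is the heart of the lemma.

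Worse, the obstacle is not one that can be finessed: for the Henderson compacta supplied by the sources you lean on, no such $M$ exists. The standard Henderson continua (Henderson, van Mill, Rubin--Schori--Walsh) are strongly infinite-dimensional, and the very result this paper invokes, \cite[Theorem 4.2]{r-s-w}, gives disjoint compact sets $A,B$ in the Henderson continuum $K$ such that for \emph{every} set $N\sub K$ with ${\rm dim}\ N\leq 2$ there is a continuum joining $A$ to $B$ and missing $N$. (This is the essential-family argument: if $N$ is zero-dimensional, one can choose separators $L_i$ for the pairs $(A_i,B_i)$, $i\geq 1$, of an essential sequence so that $N\cap\bigcap_{i\geq 1}L_i=\emptyset$, while $\bigcap_{i\geq 1}L_i$ must contain a continuum joining $A_0$ to $B_0$.) Taking $N=Z$ shows that $H\setminus Z$ contains a non-degenerate continuum whenever ${\rm dim}\ Z\leq 2$; so ``punctiform'' and ``zero-dimensional (indeed, finite-dimensional) complement'' are irreconcilable in any Henderson compactum containing a strongly infinite-dimensional subcontinuum, which covers all the examples in \cite{vM} and \cite{r-s-w}. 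Salvaging your plan would require a Henderson compactum all of whose subcontinua are weakly infinite-dimensional---nothing of the sort is available in the cited literature. The paper's proof is structured precisely to avoid this trap: its $M$ (a union of copies $M_n$ of a special punctiform set $S$ accumulating on $K\times\{0\}$) has a \emph{large} complement in $H$, and rigidity comes not from the smallness of $H\setminus M$ but from a separator property built into $S$ (every relatively closed set separating $S$ between two marked points has dimension $\geq 2$), so that the level set $u^{-1}(r)$ of a non-constant $u$, extended to a separator of $H$, must cut some $M_n$ in a set of dimension $\geq 2$; the small-remainder feature you want is recovered only later, in the proof of Theorem \ref{C_8 in inf dim}, and in a different compactification $Y$ of $M$, not in $H$ itself.
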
	
	
	A justification of this lemma is rather standard, but since we did not find convenient references, we shall give a proof to this effect at the end of this subsection. For now, taking this fact for granted, we shall proceed with a proof of our main result.
	
	\begin{proof}[Proof of Theorem \ref{C_8 in inf dim}]
		Let us adopt the notation of Lemma \ref{key lemma}, and let $\K$ be the collection of all compact zero-dimensional sets in the Henderson compactum $H$.
		
	Using CH, we inductively construct a $\co$-$\K$-Lusin set $T$ in $M$ such that $|T\cap L|=\co$ for every Borel set $L$ in $M$ with ${\rm dim}\ L>0$. To that end, we list all elements of $\K$ as $\langle K_\alpha:\alpha<\co\rangle$ and all Borel sets $L$ in $M$ with ${\rm dim}\ L>0$ as $\langle L_\alpha:\alpha<\co\rangle$, repeating each such set $L$ continuum many times. Then, we subsequently pick for $\xi<\co$
	$$
	p_\xi\in L_\xi\setminus \bigl(\bigcup_{\alpha<\xi} K_\alpha \cup \{p_\alpha:\alpha<\xi\}\bigr),
	$$
	using the fact that, under CH, the set in brackets is zero-dimensional, by the sum theorem for dimension zero (cf.  \cite[1.3.1]{eng-2}). Finally, we let
	$$
	T=\{p_\xi:\xi<\co\}.
	$$	
	
	In particular, $T$ meets each Borel set in $M$ of positive dimension in continuum many points.
		\ms 
		
			{\bf Claim 1.} Each continuous function $w:T\to \RR$ is constant on a set of cardinality $\co$.
			
			\ms 
			
	Indeed, $w$ can be extended to a continuous function $\tilde{w}:\tilde{T}\to \RR$ over a $G_\delta$-set $\tilde{T}$ in $M$ containing $T$. The Borel set $M\setminus \tilde{T}$ is disjoint from $T$, hence ${\rm dim}\ (M\setminus \tilde{T})\leq 0$. Setting in Lemma \ref{key lemma} $u=\tilde{w}$ and $M'=\tilde{T}$, we get $r\in\RR$ such that ${\rm dim}\ u^{-1}(r)>0$. Being a Borel set in $M$, $u^{-1}(r)$ intersects $T$ in a set of cardinality $\co$, i.e., $|w^{-1}(r)|=\co$.
	
	\ms 
	
	Let us now apply a counterpart of Lemma \ref{compactification of zero-dim} for infinite-dimensional spaces to the following effect (cf. the proof of \cite[Lemma 5.3.1]{eng-2}): there exists a continuous surjection $\varphi: H\to Y$ onto a compact metrizable space $Y$ such that $\varphi|M$ is a homeomorphism onto $\varphi(M)$, $\varphi(H\setminus M)\cap \varphi(M)=\emptyset$ and $Y\setminus \varphi(M)$ is a countable union of finite-dimensional compact sets.
	
	\ms 
	
		{\bf Claim 2.} The set $E=\varphi(T)$ satisfies the assertion of Theorem \ref{C_8 in inf dim}.

	\ms 

		Since $E$ is homeomorphic to $T$, Claim 1 shows that $(2)$ in Theorem \ref{C_8 in inf dim} is satisfied.
		
		To check also assertion $(1)$ in this theorem, we shall make sure that the function
		$$
		f= \varphi^{-1}|E: E\to H
		$$
		is not uniformly continuous on any set of cardinality $\co$.
		
		Since $T$ is a $\co$-$\K$-Lusin set in $M$, by Proposition \ref{C_8 from calK-Lusin} it is enough to verify that for any non-empty $A\sub M$, whenever $\varphi|\overline{A}$ is an embedding (the closure of $A$ is in ${H}$), then $\bar{A}\in \K$, i.e., ${\rm dim}\ \bar{A}=0$. 
		
		To check this, 	let  us note that 
		$$
		\varphi(\bar{A}\setminus M)=\varphi(\bar{A})\setminus\varphi(M)\sub Y\setminus\varphi(M)
		$$
		is a countable union of compact finite-dimensional sets, and so is $\bar{A}\setminus M$. Since $H$ is a Henderson compactum, $\bar{A}\setminus M$ is a countable union of zero-dimensional compact  sets, hence
		${\rm dim} (\bar{A}\setminus M)=0$ by the sum theorem (cf. \cite[1.4.5]{eng-2}). By the enlargement theorem, cf. \cite[1.5.11]{eng-2}, $\bar{A}\setminus M$ can be enlarged to a zero-dimensional $G_\delta$-set $L$ in $H$. The set $\bar{A}\setminus L$ is a $\sigma$-compact subset in the punctiform space $M$, hence again by the sum theorem, ${\rm dim}\ (\bar{A}\setminus L)\leq 0$ (cf. \cite[1.4.5]{eng-2}). By the addition theorem, cf. \cite[1.5.10]{eng-2},   ${\rm dim}\  \bar{A}\leq 1$, and $H$ being a Henderson compactum, ${\rm dim}\ \bar{A}=0$.
		
	\end{proof}

In the assumptions of Theorem \ref{C_8 in inf dim}, CH can be weakened to the assertion  (usually denoted by ${\rm cov}(\M)=\co$, cf. \cite{j-w}) that no family of less than $\co$ meager sets covers $\RR$. The only change in the proof requires checking that the inductive definition of the sequence $\langle p_\xi:\xi<\co\rangle$ is correct. More precisely, the following is true.

\begin{remark}\label{cov(M)}
	Assuming ${\rm cov}(\M)=\co$, the union of any family $\K$ of compact zero-dimensional sets in $I^\NN$, with $|\K|<\co$, is zero-dimensional.
\end{remark}

\begin{proof}
	Let	$\K$ be a family of compact zero-dimensional sets in $I^\NN$, with $|\K|<\co$. For any $K\in\K$, let us consider 
	\begin{enumerate}
		\item[(1)] $
		\U_K=\{f\in C(I^\NN,\RR):\ f^{-1}(0)\cap K=\emptyset\}.
		$
	\end{enumerate}
	
	Then $\U_K$ is open and dense in the function  space $C(I^\NN,\RR)$ and this space being perfect Polish, the assumption ${\rm cov}(\M)=\co$ guarantees that the set 
	\begin{enumerate}
		\item[(2)] $
		\G=\bigcap\{\U_K: K\in\K\}
		$
	\end{enumerate}
	is dense in $C(I^\NN,\RR)$ (cf. \cite[8.32]{ke}).
	
		Let us recall (cf. \cite{eng-2}) that a closed set $L$ in a topological space $Z$ {\sl  separates the space $Z$ between  sets $A_1,\ A_2\sub Z$},  if $Z\setminus L=U_0\cup U_1$, where $U_0$, $U_1$ 
	are open, disjoint and $A_i\sub U_i$, for $i=0,1$. 
	If $A_1$ and $A_2$ are singletons, then we say that $L$ separates $Z$ between the respective points.
	
	Let $F= \bigcup \K$. To prove that ${\rm dim}\ F =0$, it is enough to show that  for any pair of disjoint compact sets $A,\ B$ in $I^\NN$, $I^\NN$ can be separated between $A$ and $B$ by a closed set disjoint from  	$F$. Indeed, let $x\in F$ and let $U$ be a relatively open subset of $F$ with $x \in U$ and $F\setminus U\neq\emptyset$. Then $U=V\cap F$ for an open $V$ in $I^\NN$ with $B= I^\NN\setminus V\neq\emptyset$, and separating $I^\NN$ between $A=\{x\}$ and $B$ by a closed set in $I^\NN$, disjoint from $F$, leads to disjoint open sets $V_1$, $V_2$ in $I^\NN$ with $x \in V_1\cap  F \sub U$, $V_2\cap F\neq\emptyset$, and $ F = (V_1\cap  F) \cup (V_2\cap   F)$. Thus, $V_1\cap F$ is a relatively clopen in $F$ neighbourhood of $x$ contained in $U$.
	

	So let $A,\ B$ be disjoint compact sets  in $I^\NN$. We can pick $f\in \G$ such that $f(A)\sub (-\infty,0)$,
	$f(B)\sub (0,+\infty)$, as such functions form an open, non-empty set in $C(I^\NN,\RR)$. Then  $L=f^{-1}(0)$ is a closed set in $I^\NN$ separating  $I^\NN$ between $A$ and $B$ with the property that $L\cap K=\emptyset$ for all $K\in\K$, cf. (1) and (2).

\end{proof}

\begin{proof}[Proof of Lemma \ref{key lemma}]

	 Let us fix a Henderson continuum $K$ in $I^\NN$. We shall consider on $I^\NN$ the metric assigning to points $(s_0, s_1,\ldots),\ (t_0, t_1,\ldots)$ the distance $\sum\limits_i 2^{-i}|s_i-t_i|$.
	 
	 \ms 
	 	{\bf Claim 1.} There exists a punctiform $G_\delta$-set $S$ in $K$ and distinct points $a, b\in S$ such that each relatively closed set in $S$ separating $S$ between $a$ and $b$ has dimension at least 2.
	 	
	 	\ms 
	 	
	 	Indeed, since ${\rm dim}\ K \geq 4$, $K$ contains a punctiform $G_\delta$-set $W$ with ${\rm dim}\ W \geq 3$.
	 	
	 	This   theorem goes back to Mazurkiewicz, and can be justified  by the reasoning in the proof of \cite[Theorem 3.9.3]{vM} applied to a pair of disjoint compact sets $A$ and $B$ in $K$ such that for any set $N$ in $K$ with ${\rm dim}\ N \leq 2$ there is a continuum in $K$ joining $A$ and $B$ and missing $N$ (cf. \cite[Theorem 4.2]{r-s-w}), and to a continuous map $\pi:K\to[-1,1]$ sending $A$ to $-1$ and $B$ to $1$.
	 	
	 	\ss 
	 	
	 	Now, since ${\rm dim}\ W \geq 3$, there is $a\in W$ such that all sufficiently small neighbourhoods of $a$ in $W$ have boundaries of dimension $\geq$ 2. An argument in \cite[proof of Theorem 8, p. 172]{kur-II}
	 	gives a point  $b\in K\setminus \{a\}$ such that for $S=W\cup\{b\}$, every relatively closed set in $S$ separating $S$ between $a$ and $b$ has dimension $\geq$ 2. Since $S$ is a punctiform $G_\delta$-set in $K$, it satisfies Claim 1. 
	 	
	 	 \ms 
	 	 
	 	{\bf Claim 2.} There are embeddings $h_n: I^\NN\to I^\NN$, $n=1,2,\ldots$, such that, letting 
	 	\begin{enumerate}
	 		\item[] $M_0=S\times \{0\}$,  $M_n=h_n(S)\times \{\frac{1}{n}\}$\ \hbox{for}\  $n=1,2,\ldots$,
	 		\item[] $M=\bigcup\limits_{n=0}^\infty M_n\subseteq I^\NN\times I$\ \hbox{and}\ 
	 		$H=\overline{M}$ -- the closure of $M$ in $I^\NN\times I$,
	 	\end{enumerate}
	 	one obtains sets $M\sub H$ satisfying the assertion of Lemma \ref{key lemma}.
	 	
	 	\ms 
	 	
	 	We adopt the notation from Claim 1. Since $K$ is a compact, connected set in $I^\NN$, we can find open connected neighbourhoods $U_1\supseteq  U_2\supseteq\ldots$   of $K$  such that each open neighbourhood of $K$ in $I^\NN$ contains some $U_n$.
	 	
	 	Let $D$ be a countable set dense in $S$, and let $\langle (c_n,d_n):\ n\in\NN\rangle$
	 	be an enumeration of all ordered pairs of distinct points from $D$, such that each such pair appears in the sequence infinitely many times.
	
		Let us fix $n>0$.  We shall define an embedding $h_n: I^\NN\to U_n$ such that the distance from $h_n(a)$ to $c_n$  and the distance from $h_n(b)$ to $d_n$ is less than $\frac{1}{n}$.
	
	 To that end, let us notice that $U_n$ is arcwise connected, being an open,  connected set in $I^\NN$
	 (cf. \cite[Proposition 12.25]{suth}), and hence there is a continuous $f: I^\NN\to U_n$ with $f(a)=c_n$, $f(b)=d_n$.
	 Let $m\geq n$ be large enough to ensure that for any $x,y \in I^\NN$, 
	  whenever the first $m$ coordinates of $x$  and $f(y)$  
	  coincide, then $x\in U_n$. Now, denoting by  $p_{j}: I^\NN\to I$ the  projection  onto  $j$'th coordinate, 
	 we define the embedding $h_n$ by $p_i(h_n(x))=p_i(f(x))$ for $i\leq m$ and 
	$p_{m+i}(h_n(x))=p_i(x)$ for $i=1,2,\ldots$.
	
	Having defined the embeddings $h_n$, we shall check that the set $M$ in Claim 2 satisfies the assertion of Lemma \ref{key lemma}.
	
	Clearly, $M$ is a punctiform $G_{\delta}$-set in $I^\NN\times I$. 
	
	Since each neighbourhood of $K$ in $I^\NN$ contains all but finitely many $h_n(K)\sub \overline{U_n}$, the set 
	$$
	K\times \{0\} \cup \bigcup\limits_{n=1}^\infty\Bigl(h_n(K)\times \Bigl\{\frac{1}{n}\Bigr\}\Bigr)
	$$ 
	contains $H=\overline{M}$, and since it  is clearly a Henderson compactum containing $M$, so is $H$.
	
	Let $M'\sub M$ satisfy ${\rm dim}\ (M\setminus M')\leq 0$ and let $u:M'\to \RR$ be continuous.
	
	Since ${\rm dim}\ (M_0)= {\rm dim}\ (S)\geq 2$, the addition theorem yields that ${\rm dim}\ (M'\cap M_0)\geq 1$. Therefore, if $u$ is constant on $M'\cap M_0$, we are done.
	
	Let us assume that this is not the case. Then for some $x,y\in M'\cap M_0$, $u(x)\neq u(y)$, hence for $r= \frac{u(x)+u(y)}{2}$, $u^{-1}(r)$ is a relatively closed set in $M'$ which separates $M'$ between $x$ and $y$. One can extend $u^{-1}(r)$ to a closed set $L$ in $H$ such that $L$ separates $H$ between $x$ and $y$, cf. \cite[\S 21, XI, Theorem 2, p. 226]{kur-I}. Let us pick $c,d\in D$, close enough to $x$ and $y$, respectively, so that $L$ separates $H$ between $(c,0)$ and $(d,0)$, and using the fact that $(c,d)=(c_n,d_n)$ for infinitely many $n$'s and the distances from $h_n(a)$ to $c_n$, and  from $h_n(b)$ to $d_n$ tend to zero, let us pick $n$ such that 
	$L$ also separates $H$ 
	between $(h_n(a),\frac{1}{n})$ and $(h_n(b),\frac{1}{n})$. In effect, since $x\mapsto (h_n(x),\frac{1}{n})$ is a homeomorphism of $S$ onto $M_n$, and each relatively closed set separating $S$ between $a$ and $b$ has dimension $\geq$ 2, we infer that ${\rm dim}\ (M_n \cap L)\geq 2$. Since ${\rm dim}\ (M_n \setminus M')\leq 0$, applying again the addition theorem, we conclude that ${\rm dim}\ (M' \cap L)\geq 1$.
	
	Finally, since $M' \cap L = u^{-1}(r)$,  $u$ is constant on a set of positive dimension.

\end{proof}

\section{A characterization of complete metrizability}\label{sec:5}

A key step in proving that statement $C_9(\lambda,\kappa)$ is equivalent to the existence of a  $\kappa$-K-Lusin set $E$ of cardinality $\lambda$ in $\PP$ (cf.  \cite[Theorem 2.3]{C_8_vs_C_9}) is Theorem 2.1 in \cite{C_8_vs_C_9} (closely related to a theorem of Sierpiński, cf.  \cite[Remark 2.2]{C_8_vs_C_9})  stating that for any Polish space $X$ there is a sequence 
$f_0\geq f_1\ldots$ of continuous functions $f_n:X\w I$ which converges to zero pointwise but does not converge uniformly on any set with non-compact closure in $X$.  

In fact, the existence of such a function sequence characterizes 
complete metrizability  of a separable metrizable space $X$.

\begin{theorem}\label{characterization_of_completeness}
	A separable metrizable space $X$ is completely metrizable if and only if there is a sequence $f_0\geq f_1\geq \ldots$ of continuous functions $f_n:X\w I$ which converges to zero pointwise but does not converge uniformly on any 
	 set with non-compact closure in $X$.  
\end{theorem}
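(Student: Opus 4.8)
The plan is to prove the two implications separately, with the forward one being essentially a citation and the reverse one carrying all the content.

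For the ``only if'' direction, a separable metrizable space is completely metrizable exactly when it is Polish, so the required sequence is delivered directly by \cite[Theorem 2.1]{C_8_vs_C_9}, which asserts precisely the existence of a pointwise non-increasing sequence of continuous functions into $I$ converging to zero but not uniformly on any set with non-compact closure. Nothing further is needed here.

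The substance is the ``if'' direction, which I would prove by showing that $X$ is \v{C}ech-complete; since $X$ is metrizable, \v{C}ech-completeness is equivalent to complete metrizability (cf.\ \cite{eng-1}). To exhibit \v{C}ech-completeness I would invoke the characterization by a complete sequence of open covers (\cite[Theorem 3.9.2]{eng-1}): it suffices to produce open covers $\mathcal{U}_1,\mathcal{U}_2,\dots$ of $X$ such that every centered family $\mathcal{F}$ of closed sets which, for each $k$, contains a set lying inside some member of $\mathcal{U}_k$, satisfies $\bigcap\mathcal{F}\ne\emptyset$. Given $f_0\ge f_1\ge\cdots$, I would set $U_{n,k}=f_n^{-1}\big([0,\tfrac1k)\big)$ and $\mathcal{U}_k=\{U_{n,k}:n\in\NN\}$. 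Each $\mathcal{U}_k$ is an open cover because $f_n\to 0$ pointwise, and the $U_{n,k}$ increase with $n$. The decisive translation is that, for $A\sub X$, convergence $f_n\to 0$ is uniform on $A$ if and only if for every $k$ there is $n$ with $A\sub U_{n,k}$; combined with the hypothesis, this says exactly that any set captured by all the covers has compact closure in $X$.

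To verify completeness, I would take such a centered family $\mathcal{F}$, choose $F_k\in\mathcal{F}$ with $F_k\sub U_{n_k,k}$, and form the decreasing closed sets $G_k=F_1\cap\cdots\cap F_k$ (nonempty by the finite intersection property, with $G_k\sub U_{n_j,j}$ for $j\le k$). Then $Z=\bigcap_k G_k$ satisfies $f_n\to 0$ uniformly on $Z$, since $Z\sub U_{n_k,k}$ forces $f_n<\tfrac1k$ on $Z$ for all $n\ge n_k$; hence $Z$ is compact. Choosing $x_k\in G_k$, one checks directly that $f_n\to 0$ uniformly on $\{x_k:k\in\NN\}$ as well (handling the finitely many initial terms by pointwise convergence), so this set has compact closure and thus a cluster point, which necessarily lies in $\bigcap_k G_k=Z$; in particular $Z\ne\emptyset$.

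The main obstacle is the final step: the construction only shows that $Z$ lies in the chosen $F_k$, whereas I must produce a point common to the \emph{entire} family $\mathcal{F}$. I would resolve this by rerunning the construction for an arbitrary $F\in\mathcal{F}$ with each $G_k$ replaced by $F\cap G_k$, obtaining a nonempty compact set contained in $Z\cap F$; the same device applied to finitely many members shows that $\{F\cap Z:F\in\mathcal{F}\}$ is a centered family of closed subsets of the compact set $Z$. Compactness of $Z$ then yields $\bigcap_{F\in\mathcal{F}}(F\cap Z)\ne\emptyset$, hence $\bigcap\mathcal{F}\ne\emptyset$. This confirms that $(\mathcal{U}_k)$ is a complete sequence of open covers, so $X$ is \v{C}ech-complete and therefore completely metrizable.
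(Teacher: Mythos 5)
Your proof is correct, but it takes a genuinely different route from the paper's. The paper argues externally: it passes to the completion $\hat{X}$, extends the $f_n$ by Lavrentiev's theorem to continuous $\tilde{f_n}$ on a $G_\delta$-set $\tilde{X}\supseteq X$, notes that by monotonicity $G=\{x\in\tilde{X}:\lim_n\tilde{f_n}(x)=0\}$ is $G_\delta$, and then shows $G\subseteq X$ by taking a sequence $x_k\to c\in G$, applying Dini's theorem to the compact set $K=\{c\}\cup\{x_k:k\in\NN\}$ to get uniform convergence on $K\cap X$, whence $K\cap X$ is compact and so contains $c$; thus $X=G$ is $G_\delta$ in a Polish space. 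You instead argue intrinsically, verifying Engelking's open-cover criterion for \v{C}ech-completeness with the covers $\mathcal{U}_k=\{f_n^{-1}([0,\tfrac1k)):n\in\NN\}$, where the monotonicity of the $f_n$ gives the exact translation ``uniformly captured by all covers $\Leftrightarrow$ uniform convergence,'' and the hypothesis then makes every controlled centered family concentrate on a compact set; your cluster-point argument for nonemptiness (including the auxiliary families $\{F\cap G_k\}$ needed to intersect all of $\mathcal{F}$, which you rightly flagged as the delicate point) is sound. The paper's proof is shorter and stays within standard Polish-space machinery (completion, extension, Dini), directly exhibiting $X$ as a $G_\delta$-set; yours avoids any embedding or extension of functions, never uses separability, and is conceptually closer to the framework of the paper's subsequent Theorem \ref{Cech_complete} on \v{C}ech-complete Lindel\"{o}f spaces, at the cost of invoking the heavier characterization of \v{C}ech-completeness by complete sequences of open covers and the extra bookkeeping that entails.
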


\begin{proof}
	In view of \cite[Theorem 2.1]{C_8_vs_C_9} we only have to prove the "if" part of the above equivalence. So let $(X,d)$ be a separable metric space.
	Let	$f_n:X\w I$, $n\in\NN$, be a sequence  of continuous functions with $f_0\geq f_1\geq \ldots$,  which converges to zero pointwise but does not converge uniformly on any set with non-compact closure in $X$. 
	
	 Let $(\hat{X}, \hat{d})$ be the completion of $(X,d)$; clearly, the space $\hat{X}$ is Polish.  There exists a $G_\delta$-set $\tilde{X}$ in $\hat{X}$, containing $X$ such that each $f_n$ extends to a continuous function $\tilde{f_n}:\tilde{X} \to I$. Since $X$ is dense in $\tilde{X}$, we have $\tilde{f_0}\geq \tilde{f_1}\geq \ldots$ and in particular, the set
$$
G=\{x\in \tilde{X}:\lim_{n\to\infty}\tilde{f_n}(x)=0 \}
$$
is $G_\delta$ in $\tilde{X}$.

Since $X\sub G$, it remains to make sure that $G\sub X$, to conclude that $X=G$ is completely metrizable as a $G_\delta$-subset of the (Polish) space $\hat{X}$.

So let   $c\in G$ and let us pick $x_k\in X$, $k\in\NN$, such that $\lim_{k\to\infty}x_k=c$. 

Let $K=\{c\}\cup \{x_k:k\in\NN\}$. Then $K$ is compact, so the sequence $(\tilde{f_n})_{n\in\NN}$ converges uniformly on $K$ (this is a special instance of the Dini theorem \cite[Lemma 3.2.18]{eng-1}). It follows that the sequence $(f_n)_{n\in\NN}$ converges uniformly on $K\cap X$, a closed set in $X$ which therefore, by the assumed property of $(f_n)_{n\in\NN}$, is compact. Since the sequence $(x_n)_{n\in\NN}$ converges to $c$ and all $x_n$ are elements of $K\cap X$, so is $c$. In particular, $c\in X$. 

\end{proof}

With the help of Theorem \ref{characterization_of_completeness} we shall establish the following more general result (for terminology see \cite{eng-1}).

\begin{theorem}\label{Cech_complete}
	Let $X$ be a Hausdorff space. Then $X$ is a \v{C}ech-complete Lindel\"{o}f space if and only if there is a sequence $f_0\geq f_1\geq\ldots$ of continuous functions $f_n:X\to I$ converging pointwise to zero but not converging uniformly on any closed non-compact set in $X$.
\end{theorem}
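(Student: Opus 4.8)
The plan is to treat both implications by reducing to the separable metrizable case already settled in Theorem \ref{characterization_of_completeness}, the bridge being a perfect map onto a Polish space. Recall that a \emph{perfect map} is a continuous closed surjection with compact fibres, that Polish spaces are exactly the separable completely metrizable (hence \v{C}ech-complete and Lindel\"of) spaces, and that regularity, paracompactness, the Lindel\"of property, and \v{C}ech-completeness are all inverse invariants of perfect maps (cf. \cite{eng-1}).

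For the ``only if'' part, assume $X$ is \v{C}ech-complete and Lindel\"of. Being \v{C}ech-complete it is Tychonoff, hence regular, and a regular Lindel\"of space is paracompact; a paracompact \v{C}ech-complete space admits a perfect map $p:X\to Y$ onto a complete metric space (cf. \cite{eng-1}). Since $Y=p(X)$ is a continuous image of a Lindel\"of space it is Lindel\"of, so $Y$, being metrizable and Lindel\"of, is separable, i.e. Polish. Applying Theorem \ref{characterization_of_completeness} to $Y$ we obtain $g_0\geq g_1\geq\ldots$, $g_n:Y\to I$ continuous, converging pointwise to zero but not uniformly on any set with non-compact closure in $Y$, and we put $f_n=g_n\circ p$. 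Monotonicity, continuity and pointwise convergence to zero are clear. If $C\sub X$ is closed and non-compact, then $p(C)$ is closed (as $p$ is closed) and non-compact (otherwise $C$, being a closed subset of the compact set $p^{-1}(p(C))$, would be compact); hence $(g_n)$ does not converge uniformly on $p(C)$, and since $\sup_{x\in C}f_n(x)=\sup_{y\in p(C)}g_n(y)$, neither does $(f_n)$ on $C$.

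For the ``if'' part, let $f_0\geq f_1\geq\ldots$, $f_n:X\to I$, be as in the statement and define the continuous map $F:X\to I^\NN$ by $F(x)=(f_n(x))_{n}$. Let $\tilde{X}=\overline{F(X)}$ be its (compact, metrizable) closure in the Hilbert cube; every point of $\tilde{X}$ has non-increasing coordinates, so $G=\{z\in\tilde{X}:\lim_n z_n=0\}=\bigcap_m\bigcup_n\{z\in\tilde{X}:z_n<1/m\}$ is a $G_\delta$, hence Polish, and $F(X)\sub G$. The technical core is the elementary observation that if $c_k\in X$ and $F(c_k)\to z\in G$, then $\sup_k f_n(c_k)\to 0$ as $n\to\infty$; indeed $f_n(c_k)$ is non-increasing in $n$, tends to $0$ in $n$ for each fixed $k$, tends to $z_n$ in $k$ for each fixed $n$, and $z_n\to 0$, and a short $\varepsilon$-argument handling separately the finitely many small $k$ and the tail $k\geq K$ yields the claim. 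Consequently $(f_n)$ converges uniformly on the closed set $\overline{\{c_k:k\in\NN\}}$, which is therefore compact by hypothesis, so any cluster point $x$ of $(c_k)$ satisfies $F(x)=z$. This single observation yields everything: taking arbitrary $z\in G$ and $c_k$ with $F(c_k)\to z$ shows $z\in F(X)$, so $F(X)=G$; for $z\in G$ the fibre $F^{-1}(z)$ is a closed set on which $f_n\equiv z_n\to 0$ uniformly, hence compact; and running the argument with $c_k$ inside a closed set $C\sub X$ (so that the cluster point $x$ lies in $C$) shows that $F$ carries closed sets to closed sets. Thus $F:X\to G$ is a perfect map onto the Polish space $G$, and since perfect preimages inversely preserve the listed properties, $X$ is regular, paracompact (hence normal and Tychonoff), Lindel\"of, and \v{C}ech-complete.

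The main obstacle is the ``if'' direction, specifically the verification that $F$ is perfect with $F(X)=G$. The whole burden rests on the array lemma above: this is exactly the point where the hypothesis that $(f_n)$ fails to converge uniformly on every closed non-compact set is converted into compactness of the fibres $F^{-1}(z)$ and of the sets $\overline{\{c_k\}}$, thereby simultaneously forcing surjectivity of $F$ onto the $G_\delta$ set $G$, compactness of fibres, and closedness of $F$.
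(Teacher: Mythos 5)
Your proof is correct, and its overall strategy coincides with the paper's: both directions run through a perfect map onto a Polish space, with the diagonal map $F=(f_0,f_1,\dots):X\to I^{\mathbb{N}}$ doing the work in the ``if'' part, and the ``only if'' part obtained by pulling back the sequence of Theorem \ref{characterization_of_completeness} along the perfect map supplied by Frol\'ik's theorem (you re-derive that map via paracompactness rather than citing it directly, and you write out the routine transfer of non-uniform convergence that the paper calls straightforward).

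The genuine difference is in how the ``if'' direction is executed. The paper restricts the coordinate projections to $Y=F(X)$, checks that the resulting sequence $(g_n)$ inherits the hypotheses, and then \emph{applies Theorem \ref{characterization_of_completeness} to $Y$} to conclude that $Y$ is a $G_\delta$ in $I^{\mathbb{N}}$; perfectness of $F$ is then deduced from Dini's theorem combined with the criterion (cf. \cite[Theorem 3.7.18]{eng-1}) that a continuous map of a Hausdorff space into a metrizable space is perfect as soon as preimages of compact sets are compact. You bypass Theorem \ref{characterization_of_completeness} altogether: monotonicity of the coordinates lets you identify the image directly as the explicit $G_\delta$ set $G=\{z\in\overline{F(X)}:\lim_n z_n=0\}$, and your ``array lemma'' --- which is in essence a hand-made Dini argument for the compact set $\{F(c_k):k\in\mathbb{N}\}\cup\{z\}$ --- simultaneously yields surjectivity $F(X)=G$, compactness of the fibres, and closedness of $F$, so perfectness is verified from the definition rather than via the compact-preimage criterion. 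Your route is more self-contained (it even re-proves the ``if'' half of Theorem \ref{characterization_of_completeness} as a by-product), at the price of redoing work the paper simply cites; the paper's version is shorter precisely because it reuses Theorem \ref{characterization_of_completeness} and \cite[Theorem 3.7.18]{eng-1}. A small point in your favour: before invoking inverse invariance of \v{C}ech-completeness you note that the perfect preimage of a Polish space is regular and paracompact, hence Tychonoff, making explicit a step the paper delegates to the references \cite[Theorems 3.8.9 and 3.9.10]{eng-1}.
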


\begin{proof}
	First, let $X$ be a \v{C}ech-complete Lindel\"{o}f space. By a theorem of Frol\'ik, it follows that
	there is a perfect map $p:X\to Y$ onto a Polish space $Y$, cf. \cite[5.5.9]{eng-1}. Let us recall that for a Hausdorff space $X$ and a metrizable space $Y$ this means, cf. \cite[Theorems 3.7.2 and 3.7.18]{eng-1}, that $p$ is  a continuous, closed mapping and the inverse image of every compact subset of $Y$ is compact. It is  straightforward to check that if functions $f_n:Y\to I$, $n\in\NN$, with $f_0\geq f_1\geq\ldots$,  satisfy the assertion of Theorem \ref{characterization_of_completeness}, the functions $f_n\circ p:X\to I$, $n\in\NN$, have the required properties.
	
	\ss 
	
	Next, given a sequence  $f_0\geq f_1\geq\ldots$ of functions $f_n:X\to I$, described in the theorem, let us consider the diagonal map 
	\begin{enumerate}
		\item[(1)] $F=(f_0,f_1,\ldots):X\to I^\NN$ and let $Y=F(X)$.
	\end{enumerate}

	 Let $p_n:I^\NN\to I$ denote the projection onto the $n$'th coordinate and let, cf. (1), for $n\in\NN$,
	 \begin{enumerate}
	 	\item[(2)] $g_n=p_n|Y:Y\to I$.
	 \end{enumerate}
 
	 Note that, since $f_n=g_n\circ F$, the properties of the sequence $(f_n)_{n\in\NN}$ are passed to the sequence $(g_n)_{n\in\NN}$, namely:
	\begin{enumerate}
	\item[(3)] $g_0\geq g_1\geq\ldots$ is a sequence of continuous functions  converging pointwise to zero,
	 \end{enumerate}
 and for any closed set $A$ in $Y$,
 \begin{enumerate}
 	\item[(4)] if $(g_n)_{n\in\NN}$ converges on $A$ uniformly, then $A$ is compact.
 \end{enumerate}

 To see (4), let us consider $B=F^{-1}(A)$. Then, as $f_n=g_n\circ F$, $(f_n)_{n\in\NN}$ converges on $B$ uniformly, hence $B$ is compact, and so is $A= F(B)$.
 
 By Theorem \ref{characterization_of_completeness},
 \begin{enumerate}
 	\item[(5)] $Y$ is a $G_\delta$-set in $I^\NN$.
 \end{enumerate} 

To complete the proof, it is enough to show that 
\begin{enumerate}
	\item[(6)] the mapping $F$ is perfect.
\end{enumerate} 

 Indeed, if (6) holds, then since by (5), $Y$ is a \v{C}ech-complete Lindel\"{o}f space, so  is its perfect preimage $X$, cf. \cite[Theorems 3.8.9 and 3.9.10]{eng-1}.
 
 To prove (6), it suffices to check that 
  for every compact $K\sub Y$ the inverse image  $F^{-1}(K)$ is compact, cf.  \cite[Theorem 3.7.18]{eng-1}.

So let $K\sub Y$ be compact. Then, again by the Dini theorem, the sequence $(g_n)_{n\in\NN}$ converges uniformly on $K$, and since $f_n=g_n\circ F$, cf. (2), the sequence $(f_n)_{n\in\NN}$ converges uniformly on $F^{-1}(K)$. By the assumed property of this sequence, $F^{-1}(K)$ is compact, completing the proof.

\end{proof}

\bibliographystyle{amsplain}

\end{document}